\documentclass[11 pt]{amsart}
\usepackage{amscd,amsfonts,amssymb,amsmath}
\usepackage{hyperref}

\usepackage{tikz}
\usepackage[margin=3.7 cm]{geometry}

\newtheorem{theorem}{Theorem}[section]
\newtheorem{cor}[theorem]{Corollary}
\newtheorem{lemma}[theorem]{Lemma}
\newtheorem{prop}[theorem]{Proposition}

\theoremstyle{definition}

\newtheorem{remark}[theorem]{Remark}
\numberwithin{equation}{subsection}
\theoremstyle{plain}
\newtheorem*{ack}{Acknowledgement}

\newtheorem{question}{Question}

\newcommand{\Aut}{\operatorname{Aut}}

\newcommand{\Mod}{\operatorname{Mod}}

\newcommand{\C}{\operatorname{C}}

\newcommand{\Inn}{\operatorname{Inn}}

\numberwithin{equation}{section}

\begin{document}
\title{Automorphisms of pure braid Groups}
\author[V. G. Bardakov]{Valeriy G. Bardakov}
\author[M. V. Neshchadim]{Mikhail V. Neshchadim}
\author[M. Singh]{Mahender Singh}

\date{\today}
\address{Sobolev Institute of Mathematics and Novosibirsk State University, Novosibirsk 630090, Russia.}
\address{Laboratory of Quantum Topology, Chelyabinsk State University, Brat'ev Kashirinykh street 129, Chelyabinsk 454001, Russia.}
\address{Novosibirsk State Agrarian University, Dobrolyubova street, 160, Novosibirsk, 630039, Russia.}
\email{bardakov@math.nsc.ru}

\address{Sobolev Institute of Mathematics and Novosibirsk State University, Novosibirsk 630090, Russia.}
\email{neshch@math.nsc.ru}

\address{Indian Institute of Science Education and Research (IISER) Mohali, Sector 81,  S. A. S. Nagar, P. O. Manauli, Punjab 140306, India.}
\email{mahender@iisermohali.ac.in}

\subjclass[2010]{Primary 20F36; Secondary 20E36, 20D45}
\keywords{Braid group; central automorphism; extended mapping class group; pure braid group}

\begin{abstract}
In this paper, we investigate the structure of the automorphism groups of pure braid groups. We prove that, for $n>3$,  $\Aut(P_n)$ is generated by the subgroup  $\Aut_c(P_n)$ of central automorphisms of $P_n$, the subgroup $\Aut(B_n)$ of restrictions of automorphisms of $B_n$ on $P_n$ and one extra automorphism $w_n$. We also investigate the lifting and extension problem for automorphisms of some well-known exact sequences arising from braid groups, and prove that that answers are negative in most cases. Specifically, we prove that no non-trivial central automorphism of $P_n$ can be extended to an automorphism of $B_n$.
\end{abstract}
\maketitle

\section{Introduction}
Let $B_n$ be the Artin braid group on $n$-strings, and  $\{\sigma_1,\dots,\sigma_{n-1}\}$ the set of standard generators of $B_n$.  In \cite{Artin1}, Artin mentioned the problem of determining all automorphisms of the braid groups. Subsequently, in \cite{Artin2}, he determined all representations of $B_n$ by transitive permutation groups in $n$ letters. Since then the subject has attracted a lot of attention. In \cite{Dyer-Gross}, Dyer and Grossman completely determined the automorphism groups of braid groups. More precisely, they proved that
 $$\Aut(B_n) \cong \Inn(B_n) \rtimes \mathbb{Z}_2,$$
where $\Inn(B_n) \cong B_n/Z(B_n)$ is the group of inner automorphisms and $\mathbb{Z}_2= \langle \tau \rangle$ with $\tau(\sigma_i)=\sigma_i^{-1}$ for all $1 \le i \le n-1$.

The pure braid group $P_n$ is the kernel of the natural homomorphism $B_n \to S_n$, from the braid group to the symmetric group. Once the structure of $\Aut(B_n)$ is revealed, it is natural to investigate the structure of $\Aut(P_n)$. It is well known that $P_n \cong Z(P_n) \times \overline{P}_n$, where $\overline{P}_n=P_n/Z(P_n)$.  Using this decomposition, Bell and Margalit \cite[Theorem 8]{Bell} proved that,  for $n \ge 4$, 
$$\Aut(P_n)\cong \Aut_c(P_n) \rtimes \Aut(\overline{P}_n),$$ 
where $\Aut(\overline{P}_n)\cong \Mod(\mathbb{S}_{n+1})$. Here $\Aut_c(P_n)$ is the group of central automorphisms of $P_n$ and
$\Mod(\mathbb{S}_{n+1})$ is the extended mapping class group of the 2-sphere
with $n+1$ punctures.
Hence $\Aut(P_n)\cong \Aut_c(P_n) \rtimes \Mod(\mathbb{S}_{n+1})$  for $n \ge 4$. This result was used by Cohen \cite{Cohen} to give an explicit presentation of the automorphism group $\Aut(P_n)$ for $n \geq 3$. Notice that from this point of view the result of Dyer and Grossman  has the form
$$
\Aut(B_n)\cong  \Mod(\mathbb{D}_{n}),
$$
where $\mathbb{D}_n$ is the disc with $n$ punctures.

Since the group of pure braids $P_n$ is the direct product of the center
$Z(P_n)$ and the group $\overline{P}_n$, it is not difficult to determine the
group of central automorphisms $\Aut_c(P_n)$. On the other hand, $P_n$ is
a characteristic subgroup of $B_n$, and hence each automorphism of $B_n$ induces
an automorphism of $P_n$. This leads to the following question:
\begin{question}
Is $\Aut(P_n)$ generated by $\Aut_c(P_n)$ and $\Aut(B_n)$?
\end{question}
After recalling some preliminary results in Section \ref{sec2}, we answer the preceding question in Sections \ref{sec3} and \ref{sec4} of the paper. The answer is evidently affirmative for $n=2$, and we prove that it is so for $n=3$ as well (Proposition \ref{main-result-p3}). For $n > 3$, we prove that $\Aut(P_n)$ is generated by $\Aut_c(P_n)$, $\Aut(B_n)$ and one extra automorphism $w_n$ (Theorem \ref{main-result-pn}).

The last two sections of the paper are dedicated to the lifting and extension of automorphisms in certain short exact sequences arising from braid groups. A general formulation is the following:

\begin{question}
Let  $1 \to K \to G \to H \to 1$ be a short exact sequence of groups. Given $\phi \in \Aut(H)$, does there exists an automorphism of $G$ which induces $\phi?$ Analogously,  given $\psi \in \Aut(K)$, does there exists an automorphism of $G$ whose restriction to $K$ is $\psi?$
\end{question}

For finite groups, this problem has been investigated in \cite{Jin, PSY, Robinson} using cohomological methods and the fundamental exact sequence of Wells \cite{Wells}. In Section \ref{sec5}, we prove that the answers to both the questions are negative (Theorems \ref{not liftable auto} and \ref{not-extendable-auto}) for the extension $1 \to U_4 \to P_4 \to P_3 \to 1$. Finally, in Section \ref{sec6}, we consider the extension $1 \to P_n \to B_n \to S_n \to 1$. We prove that the non-inner automorphism of $S_6$ cannot be lifted to an automorphism of $B_6$ (Proposition \ref{lift-inner-sn}), and that no non-trivial element of $\Aut_c(P_n)$ can be extended to an automorphism of $B_n$ (Theorem \ref{main-result-sn}).
\bigskip

\section{Preliminary results}\label{sec2}
We use the standard notations. Given two elements $x,y$ of a group $G$, we write $y^x = x^{-1}yx$ and $[x, y] = x^{-1} y^{-1} x y$. Further, for $z \in G$, $\hat{z}$ denotes the inner automorphism of $G$ induced by $z$. An automorphism of a group $G$ is called {\it central} if it induces identity automorphism on the central quotient. The subgroup of all central automorphisms of $G$ is denoted by $\Aut_c(G)$.

Next we recall some basic facts on braid groups and refer the reader to \cite{Bir, M} for more details. The {\it braid group} $B_n$, $n \geq 2$, on $n$-strings is generated by the set
$$
\{\sigma_1, \sigma_2, \ldots, \sigma_{n-1} \},
$$
and is defined by relations

 \begin{eqnarray*}
\sigma_i \sigma_j &=& \sigma_j \sigma_i  ~~~\mbox{for}~|i - j| >1, \\
\sigma_i \sigma_{i+1} \sigma_i &=& \sigma_{i+1} \sigma_i \sigma_{i+1}  ~\mbox{for}~i = 1, 2, \ldots, n-2.
 \end{eqnarray*}

Note that $B_1$ is defined as the trivial group and $B_2$ turns out to be the infinite cyclic group. The subgroup of $B_n$ generated by the elements
$$
A_{i,j} = \sigma_{j-1} \sigma_{j-2} \ldots \sigma_{i+1} \sigma_{i}^2 \sigma_{i+1}^{-1} \ldots \sigma_{j-2}^{-1} \sigma_{j-1}^{-1}~~\textrm{where}~1 \leq i < j \leq n,
$$
is called the {\it pure braid group}, and denoted by $P_n$. Further, $P_n$ is defined by the relations
 \begin{eqnarray*}
A_{i,k} A_{ij} A_{k,j} &=& A_{k,j} A_{i,k} A_{ij},   \label{re2}\\
A_{l,j} A_{k,l} A_{k,j} &=& A_{k,j} A_{l,j} A_{k,l}  ~\mbox{for}~l < j, \label{re3}\\
(A_{k,l} A_{k,j} A_{k,l}^{-1}) A_{i,l}& =& A_{i,l} (A_{k,l} A_{k,j} A_{k,l}^{-1})  ~\mbox{for}~i < k < l < j, \label{re4}\\
 A_{k,j} A_{i,l}& = &A_{i,l} A_{kj}  ~\mbox{for}~k < i < l < j ~\mbox{or}~l < k. \label{re1}
\end{eqnarray*}
It is readily seen that $P_1$ is the trivial group, $P_2 \cong \mathbb{Z}$, and $P_3\cong F_2 \times \mathbb{Z}$.  In general, $P_n$ is characteristic in $B_n$, and the quotient $B_n / P_n$ is the symmetric group $S_n$. Further, the generators of $B_n$ act on the generators $A_{i,j} \in P_n$ by the following rules:
 \begin{eqnarray*}
 \sigma_k^{-\varepsilon} A_{i,j} \sigma_k^{\varepsilon} &=&  A_{i,j}  ~\mbox{for}~k \not= i-1, i, j-1, j, \label{c1}\\
\sigma_{i}^{-\varepsilon} A_{i,i+1} \sigma_{i}^{\varepsilon} &=&  A_{i,i+1},   \label{c2}\\
 \sigma_{i-1}^{-1} A_{i,j} \sigma_{i-1} &=&   A_{i-1,j},  \label{c3}\\
 \sigma_{i-1}A_{i,j}\sigma_{i-1}^{-1}&= &A_{i,j}^{-1}A_{i-1,j}A_{ij}, \\
 \sigma_{i}^{-1} A_{i,j} \sigma_{i} &= & A_{i+1,j} [A_{i,i+1}^{-1}, A_{i,j}^{-1}]  ~\mbox{for}~j \not= i+1 \label{c4}, \\
   \sigma_{i}A_{i,j}\sigma_{i}^{-1}&=& A_{i+1,j} ~\mbox{for}~j \not= i+1,  \\
 \sigma_{j-1}^{-1} A_{i,j} \sigma_{j-1} &=&  A_{i,j-1},   \label{c5}\\
   \sigma_{j-1}A_{i,j}\sigma_{j-1}^{-1}&=& A_{i,j-1} \left[A_{ij}^{-1},A_{j-1,j}^{-1}\right], \\
 \sigma_{j}^{-1} A_{i,j} \sigma_{j}& = & A_{i,j} A_{i,j+1} A_{i,j}^{-1},   \label{c6} \\
  \sigma_{j}A_{i,j}\sigma_{j}^{-1}&=& A_{i,j+1}\,\, \mbox{for} \,\, 1\leq i < j\neq n-1,
\end{eqnarray*}
where $\varepsilon = \pm 1$.
\medskip

For each $i = 2, \ldots, n$, consider the following subgroup 
$$
U_{i} = \langle A_{1,i}, A_{2,i}, \ldots, A_{i-1,i} \rangle,
$$
of $P_n$. It is known that each $U_i$ is a free subgroup of rank $i-1$.
One can rewrite the relations of $P_n$ as the following conjugation rules (for $\varepsilon = \pm 1$):
 \begin{eqnarray*}
A_{i,k}^{-\varepsilon} A_{k,j}  A_{i,k}^{\varepsilon} &=&   (A_{i,j} A_{k,j})^{\varepsilon} A_{k,j} (A_{i,j} A_{k,j})^{-\varepsilon},  \label{co1}\\
 A_{k,l}^{-\varepsilon} A_{k,j}  A_{k,l}^{\varepsilon} &=&
   (A_{k,j} A_{l,j})^{\varepsilon} A_{k,j} (A_{k,j} A_{l,j})^{-\varepsilon}  ~\mbox{for}~l < j, \label{co2}\\
 A_{i,l}^{-\varepsilon} A_{k,j}  A_{i,l}^{\varepsilon}& =&
   [A_{i,j}^{-\varepsilon}, A_{l,j}^{-\varepsilon}]^{\varepsilon} A_{k,j} [A_{i,j}^{-\varepsilon}, A_{l,j}^{-\varepsilon}]^{-\varepsilon}  ~\mbox{for}~i < k < l, \label{co3}\\
 A_{i,l}^{-\varepsilon} A_{k,j} A_{i,l}^{\varepsilon} &=&
A_{k,j}~  ~\mbox{for}~k < i < l < j ~\mbox{or}~  l < k. \label{co4}
 \end{eqnarray*}

It follows from these rules that $U_n$ is normal in $P_n$, and $P_n$  has the decomposition $P_n = U_n \rtimes P_{n-1}$. This gives rise to the split extension $$1 \to U_n \to P_n \to P_{n-1} \to 1.$$
By induction on $n$, it follows that $P_n$ is the semi-direct products of free groups as follows
$$
P_n \cong U_n \rtimes (U_{n-1} \rtimes (\cdots  \rtimes (U_3 \rtimes U_2)\cdots )).
$$

It is known that,  for $n \ge 2$, the center $Z(P_n)=\langle z_n \rangle$ is infinite cyclic generated by the full twist braid 
$$z_n= A_{1,2} (A_{1,3}A_{2,3})\cdots (A_{1,n} \cdots A_{n-1,n}).$$
 It follows that $P_n \cong Z(P_n) \times \overline{P}_n$, where $\overline{P}_n=P_n/Z(P_n)$ (see \cite{Bir}). More precisely,
$$
P_n \cong Z(P_n) \times \langle A_{1,3}, A_{2,3}, \dots, A_{1,n}, \dots, A_{n-1,n} \rangle,
$$
Using this decomposition, Bell and Margalit \cite[Theorem 8]{Bell} proved the following result.

\begin{theorem}
$\Aut(P_n)\cong \Aut_c(P_n) \rtimes \Aut(\overline{P}_n)$  for $n \ge 4$.
\end{theorem}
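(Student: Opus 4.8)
The plan is to realize the claimed decomposition as a split extension arising from reduction modulo the center. Since the center $Z(P_n)$ is characteristic in $P_n$, every $\phi \in \Aut(P_n)$ preserves $Z(P_n)$ and therefore descends to an automorphism $\bar{\phi}$ of the central quotient $\overline{P}_n = P_n/Z(P_n)$. This yields a homomorphism
$$\rho \colon \Aut(P_n) \longrightarrow \Aut(\overline{P}_n), \qquad \phi \longmapsto \bar{\phi}.$$
By the definition of a central automorphism as one inducing the identity on $\overline{P}_n$, the kernel of $\rho$ is precisely $\Aut_c(P_n)$; in particular $\Aut_c(P_n)$ is normal in $\Aut(P_n)$, and I obtain an exact sequence
$$1 \longrightarrow \Aut_c(P_n) \longrightarrow \Aut(P_n) \xrightarrow{\ \rho\ } \Aut(\overline{P}_n).$$
It then remains only to show that $\rho$ is surjective and admits a section, after which the asserted semidirect product follows.

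Both requirements are supplied simultaneously by the direct product decomposition $P_n = Z(P_n) \times \overline{P}_n$ recorded above, in which $\overline{P}_n$ appears as the explicit complement $H = \langle A_{1,3}, A_{2,3}, \dots, A_{n-1,n} \rangle$. I fix this complement and observe that the restriction $\pi|_H$ of the quotient map $\pi \colon P_n \to \overline{P}_n$ is an isomorphism $H \xrightarrow{\sim} \overline{P}_n$. Given $\bar{\phi} \in \Aut(\overline{P}_n)$, let $\psi = (\pi|_H)^{-1}\,\bar{\phi}\,(\pi|_H) \in \Aut(H)$ be its transport to $H$, and set
$$s(\bar{\phi}) = \mathrm{id}_{Z(P_n)} \times \psi \in \Aut(P_n).$$
Being the product of an automorphism of each factor, $s(\bar{\phi})$ is a genuine automorphism of $P_n$. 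For $g = z h$ with $z \in Z(P_n)$ and $h \in H$ one computes $\pi\big(s(\bar{\phi})(g)\big) = \pi(\psi(h)) = \bar{\phi}(\pi(h)) = \bar{\phi}(\pi(g))$, so $\rho\big(s(\bar{\phi})\big) = \bar{\phi}$. Hence $\rho$ is surjective and $s$ is a section; moreover $s$ is a homomorphism, since transport through $\pi|_H$ is functorial, giving $\psi_{\bar{\phi}\bar{\psi}} = \psi_{\bar{\phi}}\psi_{\bar{\psi}}$ and therefore $s(\bar{\phi}\bar{\psi}) = s(\bar{\phi})\,s(\bar{\psi})$.

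With a section in hand the splitting is formal: $\Aut_c(P_n) \cap s\big(\Aut(\overline{P}_n)\big) = 1$ and $\Aut_c(P_n)\, s\big(\Aut(\overline{P}_n)\big) = \Aut(P_n)$, yielding $\Aut(P_n) = \Aut_c(P_n) \rtimes \Aut(\overline{P}_n)$. I want to emphasize where the real content lies. Once the decomposition $P_n = Z(P_n) \times \overline{P}_n$ is available, the semidirect structure is a purely formal consequence in which no braid relation enters; the same argument would apply to any group admitting a complement to its center that maps isomorphically onto the central quotient. The one genuinely structural input is exactly this decomposition — the existence of such a complement (equivalently, that $\overline{P}_n$ is centerless, so that $\Aut_c(P_n)=\ker\rho$ and the quotient lifts canonically) — and it is the step one must not take for granted; here it is furnished by the facts recalled above. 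The remaining substance of the original statement, and the reason it is recorded for $n \ge 4$, is not the splitting but the companion identification $\Aut(\overline{P}_n) \cong \Mod(\mathbb{S}_{n+1})$, which I would cite rather than reprove.
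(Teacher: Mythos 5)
Your proof is correct, but it takes a genuinely different route from the paper, which in fact offers no proof of this statement at all: the theorem is recorded as a preliminary, cited verbatim from Bell and Margalit, and the hypothesis $n \ge 4$ is simply inherited from their formulation, whose real substance is the further identification $\Aut(\overline{P}_n) \cong \Mod(\mathbb{S}_{n+1})$. What you supply instead is a self-contained, elementary proof of the splitting itself: $\ker\big(\rho \colon \Aut(P_n) \to \Aut(\overline{P}_n)\big) = \Aut_c(P_n)$ holds by the very definition of a central automorphism, and the recalled internal decomposition $P_n = Z(P_n) \times H$ with $H = \langle A_{1,3}, A_{2,3}, \dots, A_{n-1,n}\rangle$ transports any $\bar{\phi} \in \Aut(\overline{P}_n)$ to the automorphism $\id_{Z(P_n)} \times \psi_{\bar{\phi}}$ of $P_n$, giving a homomorphic section of $\rho$, after which the semidirect decomposition is formal. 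This buys transparency and generality: no braid-theoretic input enters beyond the direct-product decomposition, and the splitting visibly holds for every $n$ for which that decomposition holds (in particular $n = 3$, consistent with the analysis in Section \ref{sec4}), so that the restriction $n \ge 4$ is correctly isolated as belonging only to the mapping-class-group identification. What the paper's citation buys, and your argument deliberately does not recover, is precisely that deeper isomorphism $\Aut(\overline{P}_n) \cong \Mod(\mathbb{S}_{n+1})$, which the paper genuinely needs later: the generators $\omega_1, \dots, \omega_n, \varepsilon$ used in the proof of Theorem \ref{main-result-pn} come from a presentation of $\Mod(\mathbb{S}_{n+1})$, so the citation cannot be dispensed with in the paper, only relocated --- exactly as you propose.

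One small inaccuracy in your closing remark, harmless to the proof: the existence of a complement to the center is not \emph{equivalent} to $\overline{P}_n$ being centerless. A complement forces the central quotient to be centerless, but not conversely; a perfect central extension such as $\mathrm{SL}(2,5)$ has centerless central quotient $A_5$ yet its center does not split off, since it contains a unique involution. Your argument is unaffected, because it uses the decomposition itself, which is furnished by the facts recalled in the paper, rather than deducing it from centerlessness.
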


Next, we recall a presentation for $\Aut(P_n)$ from Cohen \cite{Cohen}. The  subgroup of central automorphisms $\Aut_c(P_n)$ consists of automorphisms of the form
$$
A_{i, j} \mapsto A_{i,j} z_n^{t_{ij}},
$$
where $t_{ij} \in \mathbb{Z}$ and $\sum t_{ij}=0$ or -2. We have $z_n \mapsto z_n$
in the former case, and $z_n \mapsto z_n^{-1}$ in the latter case.
This gives a surjection $$\Aut_c(P_n) \to \mathbb{Z}_2$$ with kernel
consisting of automorphisms  for which $\sum t_{ij}=0$. Since $P_n$
has $\binom{n}{2}$ generators, this kernel is free abelian of rank $N$, where $N=\binom{n}{2}-1$.
Further, the choice $t_{12}=-2$ and all other $t_{ij}=0$ gives a splitting $\mathbb{Z}_2 \to \Aut_c(P_n)$, and hence
$$\Aut_c(P_n) \cong \mathbb{Z}^N \rtimes \mathbb{Z}_2.$$
As in \cite[Equation (11)]{Cohen}, we note that this group is generated by the automorphisms
$\psi, \phi_{i,j}:P_n \to P_n$, $1 \leq i < j \leq n$, $\{i,j \} \neq \{1,2 \}$, where

$$
\psi : A_{p,q} \mapsto \left\{
\begin{array}{ll}
A_{1,2}z_n^{-2} &~\textrm{for}~p=1, q=2\\
A_{p,q} &~\textrm{otherwise}\\
\end{array} \right.
$$
and
$$
\phi_{i,j} : A_{p,q} \mapsto \left\{
\begin{array}{ll}
A_{1,2}z_n &~\textrm{for}~p=1, q=2\\
A_{i,j}z_n^{-1} &~\textrm{for}~p=i, q=j\\
A_{p,q} &~\textrm{otherwise}.\\
\end{array} \right.
$$
It can be easily checked that $\psi^2=1$ and $\psi \phi_{i,j} \psi= \phi_{i,j}^{-1}$.

Let $\mathbb{S}_{n+1}$ denote the 2-sphere with $n+1$ punctures, and
$\Mod (\mathbb{S}_{n+1})$ its extended mapping class group. For $n\ge 1$, $\Mod (\mathbb{S}_{n+1})$
has a presentation with generators $\{\omega_1,\dots,\omega_n, \varepsilon\}$ and relations

\begin{equation}
\begin{array}{ll} \label{mcg-gen-rel}
 \omega_i\omega_j=\omega_j\omega_i\ \text{for}\ |i-j|> 1,\\
\omega_i \omega_{i+1} \omega_i= \omega_{i+1} \omega_i \omega_{i+1}, \\
\omega_1\cdots \omega_{n-1}\omega_{n}^2\omega_{n-1}\cdots \omega_1=1,\\
(\omega_1 \omega_2 \cdots \omega_n)^{n+1}=1,\\
(\varepsilon \omega_i)^2=1,~\textrm{and}~ \varepsilon^2=1.
\end{array}
\end{equation}
We refer the reader to \cite[Theorem 4.5]{Bir} for details. By Bell and Margalit \cite{Bell}, $\Mod(\mathbb{S}_{n+1}) \cong \Aut(\overline{P}_n)$ and can be viewed as a subgroup of   $\Aut(P_n)$.
Further, the generators $\omega_1,\omega_2, \dots, \omega_n, \varepsilon$ act as automorphisms of $P_n$ in the following manner \cite[Equation (12)]{Cohen}:

$$
\omega_k \colon A_{i,j} \mapsto \left\{
\begin{array}{ll}
A_{i-1,j}&\text{if $k=i-1$}\\
A_{i+1,j}^{A_{i,i+1}}&\text{if $k=i<j-1$}\\
A_{i,j-1}&\text{if $k=j-1>i$}\\
A_{i,j+1}^{A_{j,j+1}}&\text{if $k=j$}\\
A_{i,j}&\text{otherwise},
\end{array} \right.
$$
for~ $1\le k \le n-1$~ and~$k \neq 2$;

$$
\omega_2 \colon A_{i,j} \mapsto \left\{
\begin{array}{ll}
A_{1,3}^{A_{2,3}}z_n &\text{if $i=1$, $j=2$}\\
A_{1,2}z_n^{-1}&\text{if $i=1$, $j=3$}\\
A_{3,j}^{A_{2,3}}&\text{if $i=2$, $j\ge 4$}\\
A_{2,j}&\text{if $i=3$}\\
A_{i,j}&\text{otherwise,}
\end{array} \right.
$$

$$
\omega_n \colon A_{i,j} \mapsto \left\{
\begin{array}{ll}
A_{i,j}&\text{if $j \neq n$}\\
(A_{1,n}A_{1,2}A_{1,3}\cdots A_{1,n-1})^{-1}z_n &\text{if $i=1$, $j=n$}\\
(A_{2,n}A_{1,2}A_{2,3}\cdots A_{2,n-1})^{-1}z_n &\text{if $i=2$, $j=n$}\\
(A_{i,n}A_{1,i}\cdots A_{i-1,i} A_{i,i+1}\cdots A_{i,n-1})^{-1}&\text{if $3\le i$, $j=n$,}
\end{array} \right.
$$
and 
$$
\varepsilon \colon A_{i,j} \mapsto \left\{
\begin{array}{ll}
A_{1,2}^{-1} z_n^2&\text{if $i=1$, $j=2$}\\
(A_{i+1,j} \cdots A_{j-1,j})^{-1} A_{i,j}^{-1} (A_{i+1,j} \cdots A_{j-1,j})&\text{otherwise.}
\end{array} \right.
$$
\bigskip

\section{Automorphism group of $P_n$ for $n \ge 4$}\label{sec3}

As we noted in the introduction $\Aut(B_n) = \langle \Inn(B_n), \tau \rangle$, and each automorphism of $B_n$ induces an automorphism of $P_n$.
 Let $t= \tau|_{P_n}$ and $s_i=\hat{\sigma_i}|_{P_n}$ for all $1 \le i \le n-1$, where $\hat{\sigma_i}$ is the inner automorphism of $B_n$ induced by $\sigma_i$.

\begin{lemma}
The automorphism $\tau$ induces the  automorphism $t$ of $P_n$ which acts on the generators by the rule
$$
t(A_{i,j})=(A_{i,j} A_{i+1,j}\ldots A_{j-1,j})^{-1} A_{i,j}^{-1} (A_{i,j} A_{i+1,j}\ldots A_{j-1,j})
$$
for all $1 \le i < j \le n$.
\end{lemma}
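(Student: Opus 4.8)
The plan is to compute $\tau(A_{i,j})$ directly from the defining word and then recognise the answer as the claimed conjugate. First I would substitute
\[
A_{i,j}=(\sigma_{j-1}\sigma_{j-2}\cdots\sigma_{i+1})\,\sigma_i^2\,(\sigma_{i+1}^{-1}\cdots\sigma_{j-1}^{-1})
\]
into $\tau$. Since $\tau$ is the automorphism of $B_n$ sending every $\sigma_k$ to $\sigma_k^{-1}$, it replaces each letter by its inverse while preserving the order of the letters, so that
\[
\tau(A_{i,j})=\sigma_{j-1}^{-1}\cdots\sigma_{i+1}^{-1}\,\sigma_i^{-2}\,\sigma_{i+1}\cdots\sigma_{j-1}=w\,A_{i,i+1}^{-1}\,w^{-1},
\]
where $w=\sigma_{j-1}^{-1}\cdots\sigma_{i+1}^{-1}$ and I have used $A_{i,i+1}=\sigma_i^2$. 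Thus everything reduces to conjugating the single generator $A_{i,i+1}$ by $w$.

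Next I would evaluate $w\,A_{i,i+1}\,w^{-1}$ by peeling off the conjugating letters from the inside out and applying the listed braid-action rules. The only rules that intervene are $\sigma_m^{-1}A_{i,m}\sigma_m=A_{i,m}A_{i,m+1}A_{i,m}^{-1}$ together with the fact that $\sigma_m$ fixes $A_{i,l}$ whenever $m\notin\{i-1,i,l-1,l\}$. An induction on $j-i$ then gives
\[
w\,A_{i,i+1}\,w^{-1}=(A_{i,i+1}A_{i,i+2}\cdots A_{i,j-1})\,A_{i,j}\,(A_{i,i+1}\cdots A_{i,j-1})^{-1},
\]
so that $\tau(A_{i,j})=(A_{i,i+1}\cdots A_{i,j-1})\,A_{i,j}^{-1}\,(A_{i,i+1}\cdots A_{i,j-1})^{-1}$, a conjugate of $A_{i,j}^{-1}$ in which the \emph{first} index is kept fixed.

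The final, and main, step is to rewrite this in the claimed form, where instead the \emph{second} index is kept fixed. Observe that the claimed right-hand side simplifies to $(A_{i+1,j}\cdots A_{j-1,j})^{-1}A_{i,j}^{-1}(A_{i+1,j}\cdots A_{j-1,j})$, since two copies of $A_{i,j}$ cancel; hence it suffices to prove $F\,A_{i,j}\,F^{-1}=E^{-1}A_{i,j}E$, where $F=A_{i,i+1}\cdots A_{i,j-1}$ and $E=A_{i+1,j}\cdots A_{j-1,j}$. This is where the pure-braid relations do the work. From the conjugation rule $A_{i,k}^{-\varepsilon}A_{i,j}A_{i,k}^{\varepsilon}=(A_{i,j}A_{k,j})^{\varepsilon}A_{i,j}(A_{i,j}A_{k,j})^{-\varepsilon}$, valid for $i<k<j$, one extracts, by taking $\varepsilon=-1$, the clean relation $A_{i,k}A_{i,j}A_{i,k}^{-1}=A_{k,j}^{-1}A_{i,j}A_{k,j}$; and the rule asserting that $A_{i,k}$ commutes with $A_{m,j}$ whenever $k<m<j$ guarantees that the factors already produced are not disturbed. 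Conjugating $A_{i,j}$ successively by $A_{i,j-1},A_{i,j-2},\dots,A_{i,i+1}$ and using these two facts telescopes exactly into $E^{-1}A_{i,j}E$, which yields the asserted formula.

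I expect this last conversion between the two conjugacy presentations to be the only genuinely delicate point: Steps one and two are a routine unwinding of the definition via the braid-action rules, whereas the equality of the first-index and second-index conjugators relies essentially on the specific form of the relations of $P_n$. A convenient bookkeeping device for the induction in Step two is the recursion $A_{i,j}=\sigma_{j-1}A_{i,j-1}\sigma_{j-1}^{-1}$, which also makes it transparent why the length of the conjugating word grows by exactly one generator as $j$ increases.
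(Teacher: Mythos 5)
Your proof is correct, but it follows a genuinely different route from the paper's. The paper argues by induction on $n$: it reduces to the single generator $A_{1,n}$ (implicitly using that the formula is stable under the standard inclusions of braid groups, a reduction it asserts without justification), checks $n=3$ by hand, and passes from $n$ to $n+1$ via the recursion $A_{1,n+1}=\sigma_n A_{1,n}\sigma_n^{-1}$ together with the rule $\sigma_n A_{i,n}\sigma_n^{-1}=A_{i,n+1}$; the key trick is to write $\sigma_n^{-1}x\sigma_n=\sigma_n^{-2}(\sigma_n x\sigma_n^{-1})\sigma_n^{2}=A_{n,n+1}^{-1}(\sigma_n x\sigma_n^{-1})A_{n,n+1}$, which produces exactly the extra factor $A_{n,n+1}$ needed to lengthen the product $A_{1,n}A_{2,n}\cdots A_{n-1,n}$ into $A_{1,n+1}\cdots A_{n,n+1}$. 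That keeps the entire computation in the ``second-index'' conjugator form, so the paper never needs your Step three. You instead compute $\tau(A_{i,j})$ for every pair $(i,j)$ directly: the braid-action rules give the ``first-index'' form $F A_{i,j}^{-1}F^{-1}$ with $F=A_{i,i+1}\cdots A_{i,j-1}$, and you convert to the claimed form $E^{-1}A_{i,j}^{-1}E$ with $E=A_{i+1,j}\cdots A_{j-1,j}$ by telescoping with the pure braid relations; I verified against the paper's relation list that your clean relation $A_{i,k}A_{i,j}A_{i,k}^{-1}=A_{k,j}^{-1}A_{i,j}A_{k,j}$ (the $\varepsilon=-1$ case of the second conjugation rule) and the commutations $[A_{i,k},A_{m,j}]=1$ for $k<m<j$ make the telescoping close up correctly, and that the claimed right-hand side indeed collapses to $E^{-1}A_{i,j}^{-1}E$. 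What each approach buys: yours is uniform in $(i,j)$ and avoids the unproved reduction to the case $(1,n)$, at the price of the conversion between the two conjugator forms, which you rightly identify as the delicate point; the paper's induction avoids that conversion entirely but handles only $A_{1,n}$ explicitly.
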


\begin{proof}
We use induction on $n$. Notice that it suffices to prove the equality
$$
t(A_{1,n})=(A_{1,n}A_{2,n}\ldots A_{n-1,n})^{-1} A_{1,n}^{-1}(A_{1,n}A_{2,n}\ldots A_{n-1,n}).
$$

For $n=3$ we have
\begin{eqnarray*}
t(A_{1,3}) &=& (\sigma_2 \sigma_1^2 \sigma_2^{-1})^{\tau}\\
& = & \sigma_2^{-1} \sigma_1^{-2} \sigma_2\\
& = & \sigma_2^{-2} \sigma_2 \sigma_1^{-2} \sigma_2^{-1} \sigma_2^2\\
& = & A_{2,3}^{-1}A_{1,3}^{-1}A_{2,3}\\
& = & (A_{1,3}A_{2,3})^{-1} A_{1,3}^{-1}(A_{1,3}A_{2,3}).
\end{eqnarray*}

Suppose that the formula holds for  $n$. Then
\begin{eqnarray*}
t(A_{1,n+1}) &=& t(\sigma_n A_{1,n}\sigma_n^{-1})\\
& = & \sigma_n^{-1} A_{1,n}^{\tau}\sigma_n\\
& = & \sigma_n^{-1} (A_{1,n}A_{2n}\ldots A_{n-1,n})^{-1}A_{1,n}^{-1}(A_{1,n}A_{2,n}\ldots A_{n-1,n})\sigma_n\\
& = & \sigma_n^{-2}\sigma_n (A_{1,n}A_{2,n}\ldots A_{n-1,n})^{-1}     A_{1,n}^{-1}(A_{1n}A_{2,n}\ldots A_{n-1,n})\sigma_n^{-1}\sigma_n^2\\
&  & (\textrm{using the formula of conjugation} ~ \sigma_n A_{i,n}\sigma_n^{-1}=A_{i,n+1})\\
& = & A_{n,n+1}^{-1} (A_{1,n+1}A_{2,n+1}\ldots A_{n-1,n+1})^{-1}     A_{1,n+1}^{-1}(A_{1,n+1}A_{2,n+1}\ldots A_{n-1,n+1})A_{n,n+1}\\
& = & (A_{1,n+1}A_{2,n+1}\ldots A_{n,n+1})^{-1}   A_{1,n+1}^{-1}(A_{1,n+1}A_{2,n+1}\ldots A_{n,n+1}).
\end{eqnarray*}

This proves the lemma.
\end{proof}

An immediate consequence is the following.

\begin{cor} The following equalities hold
$$
t \varepsilon(A_{1,2})=A_{1,2}z_n^{-2} ~~\textrm{and}\,\,
t \varepsilon(A_{i,j})=A_{ij},\,\,
 1\leq i < j\leq n, \,\, \left\{i,j \right\}\neq \left\{1,2 \right\},
$$
i.e. $t \varepsilon =\psi.$
\end{cor}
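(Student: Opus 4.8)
The plan is to verify the two displayed identities by evaluating the composite $t\varepsilon$ on each generator $A_{i,j}$, using the explicit formula for $t$ from the Lemma together with the formula for $\varepsilon$ recorded in Section~\ref{sec2}. The first step I would carry out is a comparison of the two formulas on their own. Writing $v_{i,j}=A_{i+1,j}\cdots A_{j-1,j}$, the Lemma gives $t(A_{i,j})=(A_{i,j}v_{i,j})^{-1}A_{i,j}^{-1}(A_{i,j}v_{i,j})$; expanding the conjugating word, the leading factor $A_{i,j}$ cancels against one of the central $A_{i,j}^{-1}$'s, leaving $t(A_{i,j})=v_{i,j}^{-1}A_{i,j}^{-1}v_{i,j}$. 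This is exactly $\varepsilon(A_{i,j})$ whenever $(i,j)\neq(1,2)$. Thus $t$ and $\varepsilon$ agree on every generator except $A_{1,2}$.

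For the off-diagonal case I would exploit $\varepsilon^2=1$. For $(i,j)\neq(1,2)$ the element $\varepsilon(A_{i,j})$ is a word in the generators $A_{m,j}$ with $i\le m\le j-1$; since each such generator has second index $j$, none of them equals $A_{1,2}$ (that would force $j=2$ and $m=1$, i.e. the excluded pair). On all these letters $t$ coincides with $\varepsilon$ by the previous step, so applying the homomorphism $t$ to the word $\varepsilon(A_{i,j})$ yields the same result as applying $\varepsilon$ to it; hence $t\varepsilon(A_{i,j})=\varepsilon^2(A_{i,j})=A_{i,j}$, the desired value.

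It remains to treat $A_{1,2}$, and here I would compute directly. The Lemma with $i=1$, $j=2$ gives $t(A_{1,2})=A_{1,2}^{-1}$, while $\varepsilon(A_{1,2})=A_{1,2}^{-1}z_n^2$. Since $t$ is a homomorphism, $t\varepsilon(A_{1,2})=t(A_{1,2})^{-1}t(z_n)^2=A_{1,2}\,t(z_n)^2$, so the only missing ingredient is the value $t(z_n)$. Because $Z(P_n)=\langle z_n\rangle$ is characteristic, $t(z_n)=z_n^{\pm1}$, and I would fix the sign using the exponent-sum homomorphism $e\colon B_n\to\mathbb{Z}$, $\sigma_k\mapsto 1$: since $\tau$ inverts every $\sigma_k$ we have $e\circ t=-e$ on $P_n$, while $e(z_n)=n(n-1)\neq 0$ because $z_n$ is a product of $\binom{n}{2}$ generators each of exponent sum $2$. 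This forces $t(z_n)=z_n^{-1}$, and substituting gives $t\varepsilon(A_{1,2})=A_{1,2}z_n^{-2}$.

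Comparing with the definition of $\psi$ then yields $t\varepsilon=\psi$ on all generators, hence as automorphisms of $P_n$. The main obstacle is not any single calculation but getting the bookkeeping right: the cancellation that turns $t$'s conjugator into $\varepsilon$'s, the observation that $A_{1,2}$ never occurs among the letters of $\varepsilon(A_{i,j})$ for $(i,j)\neq(1,2)$ (which is precisely what licenses the clean use of $\varepsilon^2=1$), and the determination of the sign in $t(z_n)=z_n^{-1}$.
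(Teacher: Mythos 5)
Your proof is correct and fills in, by the natural route, exactly what the paper treats as immediate: after the cancellation $t(A_{i,j})=(A_{i+1,j}\cdots A_{j-1,j})^{-1}A_{i,j}^{-1}(A_{i+1,j}\cdots A_{j-1,j})$, the automorphisms $t$ and $\varepsilon$ agree on every generator except $A_{1,2}$, and the two displayed values then follow generator by generator. Your determination of the sign $t(z_n)=z_n^{-1}$ by exponent sums is the same argument the paper itself gives later (the lemma in Section \ref{sec6} proved via $B_n/B_n'$), and your appeal to $\varepsilon^2=1$ is licensed by the presentation \eqref{mcg-gen-rel} together with the fact that $\Mod(\mathbb{S}_{n+1})$ embeds in $\Aut(P_n)$, so no step is missing.
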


It is not difficult to see that the restriction homomorphism $\Aut(B_n) \to \Aut(P_n)$ is an embedding, enabling us to view  $\Aut(B_n)$ as a subgroup of $\Aut(P_n)$. Now, to determine $\Aut(P_n)$, introduce the following automorphism of $P_n$:
$$
w_n \colon A_{i,j} \mapsto \left\{
\begin{array}{ll}
A_{i,j}&\text{if $j \neq n$}\\
(A_{1,n}A_{1,2}A_{1,3}\cdots A_{1,n-1})^{-1} &\text{if $i=1$, $j=n$}\\
(A_{2,n}A_{1,2}A_{2,3}\cdots A_{2,n-1})^{-1} &\text{if $i=2$, $j=n$}\\
(A_{i,n}A_{1,i}\cdots A_{i-1,i} A_{i,i+1}\cdots A_{i,n-1})^{-1}&\text{if $3\le i$, $j=n$,}
\end{array} \right.
$$

Since the maps   $\omega_n$ and  $w_n$ differ by a central automorphism, it follows that $w_n$ also is an automorphism of the group $P_n$.

\begin{lemma}\label{wn not}
The automorphism $w_n$ does not lie in the subgroup $\langle \Aut_c(P_n), \Aut(B_n)\rangle$ for $n \geq 4$.
\end{lemma}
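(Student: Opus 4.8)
The plan is to find an algebraic invariant that distinguishes $w_n$ from every element of the subgroup $H := \langle \Aut_c(P_n), \Aut(B_n)\rangle$. The natural candidate is the induced action on the abelianization $P_n^{\mathrm{ab}} \cong \mathbb{Z}^{\binom{n}{2}}$, with basis the images $\bar{A}_{i,j}$ of the generators. Both central automorphisms and restrictions of automorphisms of $B_n$ act on this abelian group in a highly constrained way, whereas $w_n$ does not fit that pattern. First I would compute the matrix of $w_n$ on $P_n^{\mathrm{ab}}$: since $z_n$ abelianizes to $\sum_{i<j} \bar{A}_{i,j}$, reading off the formula for $w_n$ shows that $\bar{A}_{i,j} \mapsto \bar{A}_{i,j}$ for $j \neq n$, while for $j = n$ each generator $\bar{A}_{i,n}$ is sent to $-\bar{A}_{i,n}$ minus a sum of several other $\bar{A}_{p,q}$ with $q \neq n$.

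Next I would pin down the image of $H$ in $\operatorname{GL}(P_n^{\mathrm{ab}})$. A central automorphism sends $\bar{A}_{i,j} \mapsto \bar{A}_{i,j} + t_{ij}\,\bar{z}_n$, so its abelianized matrix is of the form $I + (\text{rank-one perturbation along } \bar{z}_n)$; in particular it fixes the class $\bar{z}_n$ up to sign and acts trivially modulo $\bar{z}_n$. For $\Aut(B_n)$, recall that $\Aut(B_n) = \langle \Inn(B_n), \tau\rangle$, so it suffices to understand the abelianized actions of $t = \tau|_{P_n}$ and of the $s_i = \hat{\sigma_i}|_{P_n}$. Inner automorphisms of $B_n$ act on $P_n^{\mathrm{ab}}$ through the conjugation-by-$\sigma_i$ rules listed in the preliminaries, which abelianize to permutations of the basis $\{\bar{A}_{i,j}\}$ (the commutator correction terms die in the abelianization, and each rule sends one $\bar{A}_{i,j}$ to another $\pm \bar{A}_{k,l}$); thus the image of $\Inn(B_n)$ lies in the group of signed permutation matrices realizing the $S_n$-action on index pairs. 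The element $t$, by the Lemma just proved, abelianizes to $\bar{A}_{i,j} \mapsto -\bar{A}_{i,j}$, i.e.\ it acts as $-I$. Combining these, every matrix in the image of $H$ factors as a product of a signed permutation of the basis, a global sign, and a central shear along $\bar{z}_n$.

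The heart of the argument is then a normal-form comparison: I would argue that no such product can equal the matrix of $w_n$. The cleanest route is to work modulo the subgroup generated by $\bar{z}_n$, or better, to pass to the quotient $P_n^{\mathrm{ab}}/\langle \bar{z}_n\rangle \cong \overline{P}_n^{\mathrm{ab}}$. There the central shears disappear entirely, so the image of $H$ consists only of $\pm(\text{signed permutations})$ of the induced basis, i.e.\ monomial matrices with entries in $\{0, \pm 1\}$ having exactly one nonzero entry per row and column. By contrast, the induced matrix of $w_n$ has rows (coming from the $j = n$ generators) with several nonzero entries, so it is emphatically not monomial; hence it cannot lie in the image of $H$ in $\operatorname{GL}(\overline{P}_n^{\mathrm{ab}})$, and therefore $w_n \notin H$.

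The main obstacle I anticipate is bookkeeping rather than conceptual: one must verify carefully that the conjugation rules really do abelianize to signed permutations (checking that every correction commutator lies in the commutator subgroup, and tracking the $\pm 1$ signs), and confirm that quotienting by $\bar{z}_n$ does not accidentally collapse the multi-term rows of $w_n$ into a single basis vector. For the latter it is enough to exhibit, for $n \geq 4$, one $j = n$ row of $w_n$ whose several $\bar{A}_{p,q}$ terms (with $q \neq n$) remain linearly independent of $\bar{z}_n$ and of each other in $\overline{P}_n^{\mathrm{ab}}$, which holds because those terms involve distinct basis elements none of which is the full sum $\bar{z}_n$.
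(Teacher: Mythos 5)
Your proposal is correct and takes essentially the same route as the paper's own proof: both pass to the abelianization $P_n/P_n'$, observe that $\langle \Aut_c(P_n), \Aut(B_n)\rangle$ acts there by signed permutations of the classes $\bar{A}_{i,j}$ combined with shears along $\bar{z}_n$, and conclude by noting that the image of $A_{1,n}$ under $w_n$ is a sum of several distinct classes, hence never of the form $\pm\bar{A}_{k,l}+\beta \bar{z}_n$. One small caution: since the classes $\bar{A}_{i,j}$ satisfy $\sum_{i<j}\bar{A}_{i,j}=0$ in $\overline{P}_n^{\mathrm{ab}}$, they are not a basis, so ``monomial matrix'' must be read as ``permutes the set $\{\pm\bar{A}_{i,j}\}$''---the linear-independence check in your final paragraph (any proper subset of the classes is independent in the quotient) is exactly what makes that reading rigorous.
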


\begin{proof}
The automorphism $w_n$ induces an automorphism $\overline{w}_n$ of the abelianisation $P_n / P_n'$.
We prove that $\overline{w}_n$  does not lies in the image of
$\langle \Aut_c(P_n), \Aut(B_n)\rangle$ into $\Aut(P_n / P_n')$. Indeed, since
 $\Aut(B_n)=\left\langle \Inn( B_n),\,\, \tau  \right\rangle$,
the actions of elements of $\Aut(B_n)$ on the quotient  $P_n/P_n'$ are compositions of inversion of  $A_{ij}$ and permutations of these elements.
Hence, it is enough to show that, in the quotient $P_n/P_n'$, the set consisting of
$$
A_{i,j},\,\,1\leq i < j\leq n-1,
$$
$$
(A_{1,n}A_{1,2}A_{1,3}\ldots A_{1,n-1})^{-1},
$$
$$
(A_{2,n}A_{1,2}A_{2,3}\ldots A_{2,n-1})^{-1},
$$
$$
(A_{i,n}A_{1,i}\ldots A_{i-1,i}A_{i,i+1}\ldots A_{i,n-1})^{-1},  \,\, i \geq 3,
$$
cannot be obtained from the set
$$
A_{i,j},\,\,1\leq i < j\leq n
$$
using composition of the maps
$$
A_{i,j} \mapsto A_{k,l}^{\alpha_{ij}}z_n^{\beta_{ij}},
$$
where $\alpha_{ij}=\pm 1$, $\beta_{ij}\in \mathbb{Z}$, $\left\{i,j \right\}\mapsto \left\{k,l \right\}$
is the permutation of pairs of indexes $\left\{i,j \right\}$, $1\leq i < j\leq n$. Note that in this case $z_n \mapsto z_n^{\pm 1}$.
Let us take
$$
w_n(A_{1,n})=(A_{1,n}A_{1,2}A_{1,3}\ldots A_{1,n-1})^{-1}.
$$
If the transformation  $w_n$ modulo  $P_n'$
is a composition of maps of the specified type, then the word
$$
(A_{1,n}A_{1,2}A_{1,3}\ldots A_{1,n-1})^{-1}z_n^{\beta}
$$
with a suitable  $\beta\in \mathbb{Z}$ has the form  $A_{k,l}^{\pm 1}$, but it is not true. Hence $w_n$ does not lie in $\langle \Aut_c(P_n), \Aut(B_n)\rangle$.
\end{proof}

\begin{cor}
The subgroup $\left\langle \Aut_c( P_n),\,\,\Aut (B_n)  \right\rangle$
is not normal in the group $\Aut(P_n)$ for  $n\geq 4$.
\end{cor}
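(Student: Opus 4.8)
The plan is to prove non-normality by exhibiting an element of $H:=\langle \Aut_c(P_n),\Aut(B_n)\rangle$ one of whose $\Aut(P_n)$-conjugates falls outside $H$. The element I would use is a half-twist $\omega_{n-1}$, whose conjugate is $\omega_n$. Two facts make this work. First, $\omega_n\notin H$: since $\omega_n$ and $w_n$ differ by a central automorphism (hence by an element of $\Aut_c(P_n)\subseteq H$), we have $\omega_n\in H$ if and only if $w_n\in H$, and the latter fails by Lemma \ref{wn not}. Second, all the generators $\omega_1,\dots,\omega_n$ of $\Mod(\mathbb{S}_{n+1})$ are conjugate in $\Mod(\mathbb{S}_{n+1})\subseteq\Aut(P_n)$: the braid relation rearranges to $(\omega_i\omega_{i+1})\,\omega_i\,(\omega_i\omega_{i+1})^{-1}=\omega_{i+1}$, so in particular $\omega_n=g\,\omega_{n-1}\,g^{-1}$ with $g=\omega_{n-1}\omega_n\in\Aut(P_n)$.

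It remains to show that $\omega_{n-1}$ (or any $\omega_i$ with $i<n$) actually lies in $H$. Here I would pass to the quotient $q\colon\Aut(P_n)\to\Aut(P_n)/\Aut_c(P_n)\cong\Mod(\mathbb{S}_{n+1})$; since $\ker q=\Aut_c(P_n)\subseteq H$ we have $H=q^{-1}(q(H))$, so $\omega_i\in H$ is equivalent to $\omega_i\in q(H)$. Now $q(H)$ is large: the preceding corollary gives $t\varepsilon=\psi\in\Aut_c(P_n)$, whence $\varepsilon=t\psi\in H$; and every inner automorphism $\hat p$ with $p\in P_n\subseteq B_n$ is the restriction of an inner automorphism of $B_n$, so $\Inn(P_n)\subseteq H$ and thus $\Inn(\overline{P}_n)\subseteq q(H)$. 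Comparing Cohen's formula for $\omega_i$ with the conjugation rules for $s_i=\hat{\sigma_i}|_{P_n}\in H$, the two automorphisms should induce the same outer action on $\overline{P}_n$, so that $\omega_i$ equals $q(s_i)$ up to an inner automorphism of $\overline{P}_n$; since both $q(s_i)$ and $\Inn(\overline{P}_n)$ lie in $q(H)$, it would follow that $\omega_i\in q(H)$, i.e. $\omega_i\in H$, for $1\le i\le n-1$.

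Combining these, $\omega_{n-1}\in H$ while its conjugate $\omega_n=g\,\omega_{n-1}\,g^{-1}$ (with $g\in\Aut(P_n)$) is not in $H$; hence $g H g^{-1}\neq H$ and $H$ is not normal in $\Aut(P_n)$. The hypothesis $n\ge 4$ enters only through Lemma \ref{wn not}, which supplies $\omega_n\notin H$.

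The main obstacle is the identification $\omega_i\in q(H)$ for $i<n$. The formulas for $s_i$ and $\omega_i$ agree only up to conjugations that vary from one generator to the next, so they are not related by a single global inner automorphism; proving that $s_i$ and $\omega_i$ nevertheless determine the same element of $\mathrm{Out}(\overline{P}_n)$ requires the explicit comparison that underlies Theorem \ref{main-result-pn}. Once any one half-twist with index below $n$ is placed in $H$, the conjugacy of the half-twists in $\Mod(\mathbb{S}_{n+1})$ closes the argument.
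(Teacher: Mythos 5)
Your proposal is correct, but it takes a genuinely different route from the paper's. The paper argues by contradiction using completeness: assuming $H_n=\langle \Aut_c(P_n),\Aut(B_n)\rangle$ is normal, conjugation by $w_n$ induces an automorphism of the quotient $H_n/\Aut_c(P_n)\cong\Aut(B_n)$, and the Dyer--Grossman theorem that $\Aut(B_n)$ is complete for $n\geq 4$ is then used to force $w_nt$ into $\langle \Inn(B_n),\Aut_c(P_n)\rangle$, contradicting Lemma \ref{wn not}. You instead exhibit non-normality directly: $\omega_{n-1}\in H$, while its conjugate $\omega_n=(\omega_{n-1}\omega_n)\,\omega_{n-1}\,(\omega_{n-1}\omega_n)^{-1}$ lies outside $H$, because $\omega_n$ and $w_n$ differ by an element of $\Aut_c(P_n)\subseteq H$ and $w_n\notin H$ by Lemma \ref{wn not}. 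Your argument is more elementary and more explicit: its only inputs are Lemma \ref{wn not}, the braid relation among the half-twists in $\Mod(\mathbb{S}_{n+1})\subseteq\Aut(P_n)$, and Cohen's formulas, whereas the paper needs the completeness theorem for $\Aut(B_n)$ and an identification $H_n/\Aut_c(P_n)\cong\Aut(B_n)$ that it leaves implicit. Moreover your proof produces a concrete conjugator $g=\omega_{n-1}\omega_n$ witnessing the failure of normality, which the paper's contradiction argument does not.

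One substantive remark: the step you flag as the ``main obstacle'' is not actually an obstacle, so your proof closes without the machinery you propose for it. By Cohen's formulas, quoted in the paper's proof of Lemma \ref{epsilon-lemma}, for $1\le k\le n-1$ with $k\neq 2$ the automorphism $\omega_k$ \emph{equals} the conjugation action $s_k=\hat{\sigma_k}|_{P_n}$ on the nose, not merely up to inner automorphisms: the apparent discrepancy between the conjugation rule $\sigma_k^{-1}A_{k,j}\sigma_k=A_{k+1,j}[A_{k,k+1}^{-1},A_{k,j}^{-1}]$ and Cohen's $A_{k+1,j}^{A_{k,k+1}}$ evaporates upon applying the pure braid relations $A_{k,k+1}A_{k,j}A_{k+1,j}=A_{k+1,j}A_{k,k+1}A_{k,j}=A_{k,j}A_{k+1,j}A_{k,k+1}$; and $\omega_2=s_2\phi_{1,3}$ with $\phi_{1,3}\in\Aut_c(P_n)$. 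Since $n\geq 4$ gives $n-1\geq 3\neq 2$, you get $\omega_{n-1}=s_{n-1}\in\Aut(B_n)|_{P_n}\subseteq H$ immediately; no passage to the quotient by $\Aut_c(P_n)$, no appeal to $\Inn(\overline{P}_n)$, and no comparison in $\mathrm{Out}(\overline{P}_n)$ is required. With that substitution, every step of your argument is justified by statements already available in the paper, and the hypothesis $n\geq 4$ enters exactly where you say it does.
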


\begin{proof}
Set $H_n=\left\langle \Aut_c( P_n),\,\,\Aut (B_n)  \right\rangle$, and suppose that $H_n$ is normal in $\Aut(P_n)$.
Then the automorphism $w_n$ induces an automorphism of $\Aut(B_n)\cong H_n/ \Aut_c(P_n)$. By \cite[Theorem 22]{Dyer-Gross}, the group
$\Aut(B_n)$ is complete for $n\geq 4$, i.e. has trivial center and only inner automorphisms.
Since $\Aut( B_n)=\left\langle \Inn(B_n),\,\, \tau  \right\rangle$,
$\Aut(B_n)/\Inn (B_n) \cong \mathbb{Z}_2$ and $w_n \not\in H_n$ by Lemma \ref{wn not}, it follows that the product $w_n t$ must lies in the subgroup generated by inner automorphism group $\Inn (B_n)$ and central automorphism group $\Aut_c(P_n)$, which is a contradiction. Hence $H_n$ is not normal in $\Aut(P_n)$ for  $n\geq 4$.
\end{proof}


Let $G_n$ be the subgroup of $\Aut(P_n)$ consisting of those automorphisms which
are restrictions of  automorphisms of $B_n$ and  the automorphism $w_n$, i.e.
$$G_n= \langle t, s_1,\ldots, s_{n-1}, w_n \rangle .$$

\begin{lemma}\label{epsilon-lemma}
The automorphisms $\omega_1, \omega_2, \dots, \omega_n, \varepsilon$ lie in $\langle \Aut_c(P_n), G_n \rangle$.
\end{lemma}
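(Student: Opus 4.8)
The plan is to dispatch each generator $\omega_1,\dots,\omega_n,\varepsilon$ by reducing it to elements already available inside $\langle \Aut_c(P_n),G_n\rangle$, namely central automorphisms together with $t,s_1,\dots,s_{n-1},w_n$. Two of the generators are essentially free. For $\varepsilon$, the Corollary establishing $t\varepsilon=\psi$ (with $\psi\in\Aut_c(P_n)$) lets me simply write $\varepsilon=t^{-1}\psi$, and since $t\in G_n$ this puts $\varepsilon\in\langle\Aut_c(P_n),G_n\rangle$. For $\omega_n$ I would use the observation recorded before Lemma \ref{wn not} that $\omega_n$ and $w_n$ differ by a central automorphism: comparing the two displayed formulas, they agree on every generator except $A_{1,n}$ and $A_{2,n}$, where they differ by the central factor $z_n$. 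Hence $\omega_n w_n^{-1}\in\Aut_c(P_n)$, and as $w_n\in G_n$ we get $\omega_n\in\langle\Aut_c(P_n),G_n\rangle$.

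The real content is the family $\omega_k$ with $1\le k\le n-1$. My plan is to prove the clean identity $\omega_k=s_k^{-1}$ (that is, $\omega_k$ is the restriction to $P_n$ of conjugation by $\sigma_k^{-1}$; in particular $\omega_k\in\langle s_k\rangle\subseteq G_n$) for all such $k$ with $k\neq 2$, working directly from the conjugation rules of Section \ref{sec2}. Checking this generator by generator, the cases $A_{k+1,j}$, $A_{i,k+1}$ and $A_{k,k+1}$ match on the nose. The two genuinely nontrivial cases are $A_{k,j}$ (for $j>k+1$) and $A_{i,k}$ (for $i<k$), where $\omega_k$ returns a conjugate by $A_{k,k+1}$ while $s_k^{-1}$ returns a commutator expression. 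To reconcile them I would invoke the relation $A_{i,k}A_{i,k+1}A_{k,k+1}=A_{k,k+1}A_{i,k}A_{i,k+1}$ (centrality of the full twist in the copy of $P_3$ on strands $i,k,k+1$), which yields identities such as $A_{k,k+1}^{-1}A_{k+1,j}A_{k,k+1}=A_{k,j}A_{k+1,j}A_{k,j}^{-1}=A_{k+1,j}[A_{k,k+1}^{-1},A_{k,j}^{-1}]$, so that the two outputs coincide. Once $\omega_k=s_k^{-1}$ is established, $\omega_k\in G_n$ for every $k\neq 2$.

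For the exceptional index $k=2$ the same computation shows that $\omega_2$ and $s_2^{-1}$ agree on all generators except $A_{1,2}$ and $A_{1,3}$, where the explicit $z_n^{\pm1}$ appearing in the formula for $\omega_2$ make them differ by central factors. Rather than pin down the precise central automorphism, I would argue structurally: $\omega_2$ and $s_2^{-1}$ induce the same automorphism of $\overline{P}_n=P_n/Z(P_n)$, so the composite $\omega_2 s_2$ induces the identity on $\overline{P}_n$ and hence lies in $\Aut_c(P_n)$; therefore $\omega_2=(\omega_2 s_2)\,s_2^{-1}\in\langle\Aut_c(P_n),s_2\rangle\subseteq\langle\Aut_c(P_n),G_n\rangle$. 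This same principle — that two automorphisms agreeing modulo $Z(P_n)$ must differ by an element of $\Aut_c(P_n)$ — is exactly what underlies the $\omega_n$ argument above.

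The main obstacle I anticipate is computational rather than conceptual: carrying out the generator-by-generator verification of $\omega_k=s_k^{-1}$, and in particular finding and applying the correct instances of the relations of $P_n$ to rewrite the commutator outputs of $s_k^{-1}$ as the conjugation outputs of $\omega_k$. Everything else then follows formally from the membership of $t$, $w_n$ and $s_2$ in $G_n$, together with the fact that agreement modulo the centre forces two automorphisms to differ by a central automorphism.
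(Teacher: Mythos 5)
Your proposal is correct and is essentially the paper's own argument: the paper likewise identifies each $\omega_k$ ($1\le k\le n-1$, $k\neq 2$) with the conjugation automorphism $s_k$ (citing Cohen rather than verifying the relations of $P_n$ directly, as you propose to do), writes $\omega_2$ as the conjugation $s_2$ composed with the central automorphism $\phi_{1,3}$, and disposes of $\omega_n$ and $\varepsilon$ exactly as you do, as $w_n$ and $t$ multiplied by central automorphisms. The only caveat is a convention slip: with the paper's notation $s_k=\hat{\sigma}_k|_{P_n}$, where $\hat{\sigma}_k(A)=A^{\sigma_k}=\sigma_k^{-1}A\sigma_k$, the identity your computation actually yields is $\omega_k=s_k$ rather than $\omega_k=s_k^{-1}$ --- immaterial for the conclusion, since both lie in $\langle s_k\rangle\subseteq G_n$.
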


\begin{proof}
 It follows that $\omega_k(z_n)=z_n$ for all $1\le k \le n$, and $\varepsilon(z_n)=z_n$.
 As noted by Cohen \cite{Cohen}, for $1\le k \le n-1$ and $k\neq 2$,
 the automorphism $\omega_k$ is given by the conjugation action of the braid
 $\sigma_k$ on the pure braid group. More precisely,
 $\omega_k(A_{i,j}^{})=A_{i,j}^{\sigma_k}=\sigma_k^{-1}A_{i,j}^{}\sigma_k^{}$.
 On the other hand, $\omega_2$ is the composite of the conjugation action of $\sigma_2$
 and the automorphism $\phi_{1,3}$ given by
$$
\phi_{1,3} \colon A_{i,j} \mapsto  \left\{
\begin{array}{ll}
A_{1,2}z_n&\textrm{if}~i=1, j=2\\
A_{1,3}z_n^{-1}&\textrm{if}~i=1, j=3\\
A_{i,j} &~\textrm{otherwise}.
\end{array} \right.
$$
The automorphism $\omega_n$ is the product of $w_n$ and some central automorphism. Finally, the automorphism $\varepsilon$ is the product of $t$ and some central automorphism.
\end{proof}

We now prove the main result of this section.

\begin{theorem}\label{main-result-pn}
$\Aut(P_n) \cong \Aut_c(P_n) \rtimes G_n$ for $n \ge 4$.
\end{theorem}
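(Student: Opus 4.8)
The plan is to establish the internal semidirect product directly, by verifying its three defining features: that $\Aut_c(P_n)$ is normal in $\Aut(P_n)$, that $\Aut(P_n)=\Aut_c(P_n)\cdot G_n$, and that $\Aut_c(P_n)\cap G_n=1$. Normality is immediate. Since $Z(P_n)$ is characteristic, there is a natural homomorphism $\rho\colon\Aut(P_n)\to\Aut(\overline{P}_n)$, and by definition an automorphism is central precisely when it induces the identity on $\overline{P}_n=P_n/Z(P_n)$; hence $\Aut_c(P_n)=\ker\rho$ is normal. For generation I would invoke the theorem of Bell and Margalit \cite{Bell}, which realizes $M=\langle\omega_1,\dots,\omega_n,\varepsilon\rangle\cong\Aut(\overline{P}_n)$ as a complement, so that $\Aut(P_n)=\Aut_c(P_n)\rtimes M$. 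By Lemma~\ref{epsilon-lemma} each $\omega_k$ and $\varepsilon$ lies in $\langle\Aut_c(P_n),G_n\rangle$, whence $\Aut(P_n)=\langle\Aut_c(P_n),M\rangle\subseteq\langle\Aut_c(P_n),G_n\rangle$, giving $\Aut(P_n)=\Aut_c(P_n)\cdot G_n$.

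The entire content thus sits in the claim $\Aut_c(P_n)\cap G_n=1$, which by the two facts above is equivalent to the injectivity of $\rho|_{G_n}\colon G_n\to\Aut(\overline{P}_n)$. Note that $\rho|_{G_n}$ is already surjective by generation, and that $\rho(t)=\overline{\varepsilon}$, $\rho(s_k)=\overline{\omega}_k$ for $1\le k\le n-1$, and $\rho(w_n)=\overline{\omega}_n$: indeed $s_k=\omega_k$ for $k\ne 2$, while $t$, $s_2$ and $w_n$ differ from $\varepsilon$, $\omega_2$ and $\omega_n$ by central automorphisms (this is the Corollary to the first lemma, Lemma~\ref{epsilon-lemma}, and the remark preceding it). So the generating set $\{t,s_1,\dots,s_{n-1},w_n\}$ of $G_n$ maps onto the standard generating set of $\Mod(\mathbb{S}_{n+1})\cong\Aut(\overline{P}_n)$.

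To prove injectivity I would construct an inverse, namely a homomorphism $q\colon\Mod(\mathbb{S}_{n+1})\to G_n$ with $\varepsilon\mapsto t$, $\omega_k\mapsto s_k$ for $1\le k\le n-1$, and $\omega_n\mapsto w_n$. This amounts to checking that $t,s_1,\dots,s_{n-1},w_n$ satisfy the defining relations \eqref{mcg-gen-rel}. The relations not involving $w_n$ come for free: the braid relations among the $s_i$, the relation $t^2=1$, and the relations $(ts_i)^2=1$ for $i\le n-1$ are the restrictions to $P_n$ of the braid relations in $B_n$ together with $\tau^2=1$ and $\tau\widehat{\sigma}_i\tau^{-1}=\widehat{\sigma}_i^{-1}$, all of which hold in $\Aut(B_n)$ and survive restriction. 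Granting the remaining relations, the composite $\rho\circ q\colon\Mod(\mathbb{S}_{n+1})\to\Aut(\overline{P}_n)\cong\Mod(\mathbb{S}_{n+1})$ is the identity on generators, hence the identity; thus $q$ is injective, and being surjective onto $G_n$ it is an isomorphism. Then $\rho|_{G_n}=q^{-1}$ is an isomorphism, so $\Aut_c(P_n)\cap G_n=\ker(\rho|_{G_n})=1$ and $\Aut(P_n)=\Aut_c(P_n)\rtimes G_n$.

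The main obstacle is precisely the verification of the relations involving $w_n$: the commutations $s_iw_n=w_ns_i$ for $i\le n-2$, the braid relation $s_{n-1}w_ns_{n-1}=w_ns_{n-1}w_n$, the sphere relation $s_1\cdots s_{n-1}w_n^2 s_{n-1}\cdots s_1=1$, the chain relation $(s_1\cdots s_{n-1}w_n)^{n+1}=1$, and $(tw_n)^2=1$. These must be confirmed as \emph{exact} identities in $P_n$ using the explicit conjugation formulas for the $\sigma_k^{\pm1}$ on the $A_{i,j}$ together with the displayed action of $w_n$. The delicate point is that $w_n$ differs from the mapping-class generator $\omega_n$ by a central automorphism, so a priori each such relation could hold only modulo a residual element of $\Aut_c(P_n)$; the real work is to check that these central corrections cancel. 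I expect this computation, rather than the structural scaffolding above, to be the crux of the argument.
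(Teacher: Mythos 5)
Your first two steps are, in substance, the paper's entire proof: the paper also gets generation from Bell--Margalit plus Lemma \ref{epsilon-lemma}, and it dismisses everything else with the phrase ``Obviously, $\Aut_c(P_n) \rtimes G_n \le \Aut(P_n)$''. The difference is your third step, and that step does not just resist computation --- it fails. The relations \eqref{mcg-gen-rel} do \emph{not} hold exactly for $t, s_1, \dots, s_{n-1}, w_n$; in fact $\Aut_c(P_n) \cap G_n \neq 1$, so the homomorphism $q$ you want cannot exist and $\rho|_{G_n}$ is not injective. Test the braid relation on the pair $(s_3, w_4)$ for $n=4$ (both words are palindromes, so no composition-convention issues arise). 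Working modulo $P_4'$, where $s_3$ acts on the generators $A_{i,j}$ by the transposition of the indices $3,4$ and $w_4$ acts by its defining formulas, the automorphisms $s_3w_4s_3$ and $w_4s_3w_4$ agree on $A_{1,2}, A_{1,3}, A_{1,4}, A_{2,3}, A_{2,4}$, but
\[
s_3w_4s_3(A_{3,4}) \equiv (A_{1,4}A_{2,4}A_{3,4})^{-1}, \qquad
w_4s_3w_4(A_{3,4}) \equiv (A_{1,4}A_{2,4}A_{3,4})^{-1} z_4^{2} \pmod{P_4'}.
\]
Hence $\delta := (w_4s_3w_4)^{-1}(s_3w_4s_3) \neq 1$. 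On the other hand $\delta \in \ker\rho = \Aut_c(P_4)$, because modulo the normal subgroup $\Aut_c(P_4)$ the elements $s_3, w_4$ coincide with $\omega_3, \omega_4$, which \emph{do} satisfy the braid relation; concretely $\delta$ is the central automorphism $A_{3,4} \mapsto A_{3,4}z_4^{-2}$, all other $A_{i,j}$ fixed. So the braid relation involving $w_4$ fails by a non-trivial central factor, and $\delta$ is a non-trivial element of $G_4 \cap \Aut_c(P_4)$. The chain relation fails as well: a direct computation on $P_4/P_4'$ (where $\Aut_c(P_4)$ embeds faithfully) shows $(s_1s_2s_3w_4)^5$ is the central automorphism with $t_{12}=-2$, $t_{23}=2$, $t_{34}=-2$. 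The $z_n$-factors in Cohen's generators $\omega_2$ and $\omega_n$ are exactly what make \eqref{mcg-gen-rel} hold; $s_2$ and $w_n$ are obtained by deleting them, and the deletion destroys these relations. The paper's own presentation of $\Aut(P_4)$ at the end of Section \ref{sec3} corroborates this: its relations for $s_3w_4s_3$ and $(s_1s_2s_3w_4)^5$ carry explicit non-trivial central corrections.

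The upshot is that the obstacle you flagged as ``the real work'' is a genuine obstruction, not a verification: no argument can yield $\Aut_c(P_n) \cap G_n = 1$, because that statement is false. What your first two steps (and the paper's proof) actually establish is the product decomposition $\Aut(P_n) = \Aut_c(P_n)\, G_n$ with $\Aut_c(P_n)$ normal --- the generation statement announced in the abstract --- and nothing more; read literally as an internal semidirect product with complement $G_n$, the displayed isomorphism is incorrect, and the paper never addresses the intersection. If one insists on a complement, one must keep Cohen's generators: $\Aut(P_n) = \Aut_c(P_n) \rtimes \langle \omega_1, \dots, \omega_n, \varepsilon\rangle$ is the Bell--Margalit splitting, and $\omega_2, \omega_n, \varepsilon$ differ from $s_2, w_n, t$ by precisely the central factors your argument would need to cancel.
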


\begin{proof}
Obviously, $\Aut_c(P_n) \rtimes G_n \le \Aut(P_n)$. By Bell and Margalit \cite[Theorem 8]{Bell},
$\Aut(P_n)\cong \Aut_c(P_n) \rtimes \Mod(\mathbb{S}_{n+1})$  for $n \ge 4$. Recall that, $\Mod (\mathbb{S}_{n+1})= \langle \omega_1,\dots,\omega_n, \varepsilon \rangle$.
The proof of the theorem is now complete by Lemma \ref{epsilon-lemma}.
\end{proof}

We conclude this section with an explicit presentation of $\Aut(P_4)$. By definition of $\omega_i$'s and $\varepsilon$, we have
$$
\omega_1=s_1,\,\, \omega_2=s_2 \phi_{13},\,\, \omega_3=s_3,\,\, \omega_4=w_4 \phi_{14} \phi_{24},\,\, \varepsilon\tau=\psi.
$$
Now rewriting the relations of $\Aut(P_4)$ in the new generators yields the following result.

\begin{prop}
The group $\Aut(P_4)$ in the generators
$$
\{t,\,\, s_1,\,\, s_2, \,\, s_3, \,\,  w_4, \,\, \psi, \,\phi_{13},\,\, \phi_{23},\,\, \phi_{14},\,\, \phi_{24},\,\, \phi_{34}.\}
$$
is defined by the  relations:
\begin{eqnarray*}
 s_1 s_3 &=& s_3 s_1,\\
  s_1 w_4 &=& w_4 sa_1,\\
 s_2 w_4&=&w_4 s_2(\phi_{13}^{-1} \phi_{24}^{-1} \phi_{34})^2,\\
 s_1 s_2 s_1 &=&s_2 s_1 s_2,\\
 s_3 w_4 s_3 &=&w_4 s_3 w_4 (\phi_{14} \phi_{24} \phi_{34}^{-1})^2,\\
 s_1 s_2 s_3 w_4^2 s_3 s_2 sa_1 &=&(\phi_{13}^{-1} \phi_{23}^{2})^2,\\
 (s_1 s_2 s_3 w_4)^5 &=&(\phi_{13}^{-1} \phi_{23})^3 (\phi_{14}^{-1} \phi_{24}^{-1} \phi_{34})^4,\\
& &\\
 (\psi t)^2&=& \psi^{-1} t^{-1} \psi t=  \psi^2= t^2=1,\\
 \psi s_k&=&s_k \psi ~~\textrm{for}~~ k=1,2,3,\\
 \psi w_4&=&w_4 \psi,\\
 (\psi \phi_{ij})^2&=&1~~\textrm{for all}  ~~\left\{i,j\right\},\\
t \phi_{ij} t &=& \phi_{ij}^{-1} \psi^{-2} ~~\textrm{for all}~ ~ \left\{i,j\right\},\\
\phi_{ij} \phi_{pq} &=& \phi_{pq} \phi_{ij}~~\textrm{for all}~~  \left\{i,j\right\},\left\{p,q \right\},
\end{eqnarray*}

\begin{eqnarray*}
(t s_1)^2&=&\psi^{-2},\\
(t s_2)^2&=&\phi_{13}^{-2},\\
(t s_3)^2&=&\psi^{-2},\\
(t w_4)^2&=&\psi^{2},\\
& &\\
{s_1}^{-1}\phi_{13}{s_1}&=&\phi_{23},\\
{s_1}^{-1}\phi_{23}{s_1}&=&\phi_{13},\\
{s_1}^{-1}\phi_{14}{s_1}&=&\phi_{24},\\
{s_1}^{-1}\phi_{24}{s_1}&=&\phi_{14},\\
{s_1}^{-1}\phi_{34}{s_1}&=&\phi_{34},\\
& &\\
{s_2}^{-1}\phi_{13}{s_2}&=&\phi_{13}^{-1},\\
{s_2}^{-1}\phi_{23}{s_2}&=&\phi_{13}^{-1}\phi_{23},\\
{s_2}^{-1}\phi_{14}{s_2}&=&\phi_{13}^{-1}\phi_{14},\\
{s_2}^{-1}\phi_{24}{s_2}&=&\phi_{13}^{-1}\phi_{34},\\
{s_2}^{-1}\phi_{34}{s_2}&=&\phi_{13}^{-1}\phi_{24},\\
& &\\
{s_3}^{-1}\phi_{13}{s_3}&=&\phi_{14},\\
{s_3}^{-1}\phi_{23}{s_3}&=&\phi_{24},\\
{s_3}^{-1}\phi_{14}{s_3}&=&\phi_{13},\\
{s_3}^{-1}\phi_{24}{s_3}&=&\phi_{23},\\
{s_3}^{-1}\phi_{34}{s_3}&=&\phi_{34},\\
& &\\
{w_4}^{-1}\phi_{13}{w_4}&=&\phi_{13} \phi_{24} \phi_{34}^{-1},\\
{w_4}^{-1}\phi_{23}{w_4}&=&\phi_{23} \phi_{14} \phi_{34}^{-1},\\
{w_4}^{-1}\phi_{14}{w_4}&=&\phi_{24},\\
{w_4}^{-1}\phi_{24}{w_4}&=&\phi_{14},\\
{w_4}^{-1}\phi_{34}{w_4}&=&\phi_{14} \phi_{24}\phi_{34}^{-1}.
\end{eqnarray*}
\end{prop}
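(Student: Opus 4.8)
The plan is to obtain the presentation by a Tietze transformation applied to Cohen's presentation of $\Aut(P_4)$ \cite{Cohen}, which is stated in the generators $\omega_1,\omega_2,\omega_3,\omega_4,\varepsilon$ of $\Mod(\mathbb{S}_5)$ together with the generators $\psi,\phi_{13},\phi_{23},\phi_{14},\phi_{24},\phi_{34}$ of $\Aut_c(P_4)$. By \thmref{main-result-pn} and \lemref{epsilon-lemma} the elements $t,s_1,s_2,s_3,w_4,\psi,\phi_{ij}$ already generate $\Aut(P_4)$, so it suffices to rewrite Cohen's relations in these generators. The change of generators is the one recorded just before the statement, namely $\omega_1=s_1$, $\omega_2=s_2\phi_{13}$, $\omega_3=s_3$, $\omega_4=w_4\phi_{14}\phi_{24}$, and $\varepsilon=t^{-1}\psi$ (equivalently $t\varepsilon=\psi$, from the corollary above, and $t^{-1}=t$ since $t^2=1$). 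Read as Tietze relations, these equations introduce $t,s_1,s_2,s_3,w_4$ and eliminate $\omega_1,\dots,\omega_4,\varepsilon$, while $\psi$ and the $\phi_{ij}$ are carried over unchanged.

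I would organize the rewriting according to the natural blocks of Cohen's presentation. First, the relations internal to $\Aut_c(P_4)$, namely $\psi^2=1$, $\psi\phi_{ij}\psi=\phi_{ij}^{-1}$ and $\phi_{ij}\phi_{pq}=\phi_{pq}\phi_{ij}$, involve only unchanged generators and pass through verbatim. Second, the relations $\varepsilon^2=1$ and $(\varepsilon\omega_i)^2=1$ translate, via $\varepsilon=t^{-1}\psi$ and the substitutions for $\omega_i$, into the block $(\psi t)^2=\psi^2=t^2=1$ together with $(ts_1)^2=\psi^{-2}$, $(ts_2)^2=\phi_{13}^{-2}$, $(ts_3)^2=\psi^{-2}$ and $(tw_4)^2=\psi^{2}$; here one also reads off $t\phi_{ij}t=\phi_{ij}^{-1}\psi^{-2}$ from the action of $\varepsilon$ on the $\phi_{ij}$, and deduces that $t$ and $\psi$ commute. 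Third, the semidirect-product relations describing how each $\omega_k$ and $\varepsilon$ conjugates $\psi$ and the $\phi_{ij}$ become the long conjugation block $s_k^{-1}\phi_{ij}s_k=\cdots$ and $w_4^{-1}\phi_{ij}w_4=\cdots$, each obtained from Cohen's action formulas after discarding the central $\phi_{13}$- or $(\phi_{14}\phi_{24})$-factor built into $\omega_2$ and $\omega_4$.

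The bulk of the work, and the main obstacle, lies in the ``geometric'' block coming from the braid and surface relations of $\Mod(\mathbb{S}_5)$: the commuting relations $\omega_i\omega_j=\omega_j\omega_i$ for $|i-j|>1$, the three braid relations $\omega_i\omega_{i+1}\omega_i=\omega_{i+1}\omega_i\omega_{i+1}$, the sphere relation $\omega_1\omega_2\omega_3\omega_4^2\omega_3\omega_2\omega_1=1$, and the torsion relation $(\omega_1\omega_2\omega_3\omega_4)^5=1$. After substituting $\omega_2=s_2\phi_{13}$ and $\omega_4=w_4\phi_{14}\phi_{24}$ one must repeatedly commute the factors $s_i$ and $w_4$ past the central factors $\phi_{ij}$, using exactly the conjugation relations of the previous block, and then collect all central corrections on one side. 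This normalization is what produces the twisted right-hand sides $s_2w_4=w_4s_2(\phi_{13}^{-1}\phi_{24}^{-1}\phi_{34})^2$, $s_3w_4s_3=w_4s_3w_4(\phi_{14}\phi_{24}\phi_{34}^{-1})^2$, $s_1s_2s_3w_4^2s_3s_2s_1=(\phi_{13}^{-1}\phi_{23}^2)^2$, and the final $(s_1s_2s_3w_4)^5=(\phi_{13}^{-1}\phi_{23})^3(\phi_{14}^{-1}\phi_{24}^{-1}\phi_{34})^4$; tracking the exponents of the $\phi_{ij}$ accumulated through the fifth power is the most delicate calculation.

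Finally, I would confirm that no relation has been lost or added. The Tietze substitutions are manifestly invertible, each old generator being a word in the new ones and conversely, so the rewritten list is automatically a complete set of defining relations for $\Aut(P_4)$. As a consistency check one can abelianize both presentations, or verify that the group defined by the new relations carries the expected split structure $\Aut_c(P_4)\rtimes G_4$ of \thmref{main-result-pn}, with the central block recovering $\Aut_c(P_4)\cong\mathbb{Z}^5\rtimes\mathbb{Z}_2$.
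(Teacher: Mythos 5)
Your proposal is correct and is essentially identical to the paper's own proof: the paper also obtains the presentation by substituting $\omega_1=s_1$, $\omega_2=s_2\phi_{13}$, $\omega_3=s_3$, $\omega_4=w_4\phi_{14}\phi_{24}$, $\varepsilon t=\psi$ into the presentation of $\Aut(P_4)\cong \Aut_c(P_4)\rtimes \Mod(\mathbb{S}_{5})$ from Cohen and rewriting the relations in the new generators, i.e., precisely your Tietze-transformation argument. Your block-by-block organization (central relations verbatim, $\varepsilon$-relations becoming $t$-relations, conjugation block, and the ``geometric'' block with accumulated central corrections) is in fact more explicit than the paper's one-line justification.
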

\bigskip


\section{Automorphism group of $P_3$}\label{sec4}
For $n=3$, we have
$B_3= \langle\sigma_1, \sigma_2~|~ \sigma_2\sigma_1\sigma_2=\sigma_1\sigma_2\sigma_1\rangle$.
Let $A_{1,2}=\sigma_1^2$, $A_{1,3}=\sigma_2 \sigma_1^2 \sigma_2^{-1}$ and $A_{2,3}=\sigma_2^2$.
Then
$$
P_3=  \langle A_{1,2}, A_{1,3},A_{2,3} \rangle=
 \langle A_{1,2}A_{1,3}A_{2,3} \rangle \times  \langle A_{1,3}, A_{2,3} \rangle.
$$
We set $x= A_{1,3}$, $y=A_{2,3}$ and $z=A_{1,2}A_{1,3}A_{2,3}$.
Then $P_3= \langle z \rangle \times \langle x, y \rangle$, where $\langle x,y \rangle$
is the free group $F_2$ on two generators.

\begin{lemma}\label{action1}
The action of $\sigma_1$ and  $\sigma_2$ on $x,y,z$ is given by
\begin{enumerate}
\item $z^{\sigma_1}=z^{\sigma_2}=z$.
\item $x^{\sigma_1}= xyx^{-1}$ and $x^{\sigma_2}= y^{-1}x^{-1}z$.
\item $y^{\sigma_1}=x$ and $y^{\sigma_2}=y$.
\end{enumerate}
\end{lemma}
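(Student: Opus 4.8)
The plan is to compute each conjugate $v^{\sigma_i} = \sigma_i^{-1} v \sigma_i$ directly, reducing everything to two ingredients: the centrality of $z$ and the explicit commutation rules of $\sigma_1,\sigma_2$ on the generators $A_{i,j}$ recorded in Section~\ref{sec2}. First I would record that $z = A_{1,2}A_{1,3}A_{2,3}$ is the full twist $(\sigma_1\sigma_2)^3$: substituting the definitions gives $z = \sigma_1^2\sigma_2\sigma_1^2\sigma_2$, and a single application of the braid relation $\sigma_2\sigma_1\sigma_2=\sigma_1\sigma_2\sigma_1$ inside $(\sigma_1\sigma_2)^3 = \sigma_1(\sigma_2\sigma_1\sigma_2)\sigma_1\sigma_2$ turns the latter into $\sigma_1^2\sigma_2\sigma_1^2\sigma_2$. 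Since the full twist generates $Z(B_3)$, it is fixed by conjugation by any braid, which is exactly part (1). Moreover, writing out that $z$ commutes with $A_{1,2}$ and with $A_{2,3}$ yields the cyclic identities $A_{1,2}A_{1,3}A_{2,3} = A_{1,3}A_{2,3}A_{1,2} = A_{2,3}A_{1,2}A_{1,3}$, which I will use repeatedly below.

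For part (3), $y^{\sigma_2} = \sigma_2^{-1}\sigma_2^2\sigma_2 = \sigma_2^2 = y$ is immediate, while $y^{\sigma_1} = \sigma_1^{-1}A_{2,3}\sigma_1 = A_{1,3} = x$ follows from the rule $\sigma_{i-1}^{-1}A_{i,j}\sigma_{i-1} = A_{i-1,j}$ with $i=2,\,j=3$ (equivalently, from $\sigma_1^{-1}\sigma_2\sigma_1 = \sigma_2\sigma_1\sigma_2^{-1}$, whence $\sigma_1^{-1}\sigma_2^2\sigma_1 = \sigma_2\sigma_1^2\sigma_2^{-1}$). For the easy half of part (2), $x^{\sigma_2} = \sigma_2^{-1}(\sigma_2\sigma_1^2\sigma_2^{-1})\sigma_2 = \sigma_1^2 = A_{1,2}$; it then remains to check that $A_{1,2} = y^{-1}x^{-1}z$, and after substituting $z = A_{1,2}A_{1,3}A_{2,3}$ this is precisely the cyclic identity $A_{1,3}A_{2,3}A_{1,2} = A_{1,2}A_{1,3}A_{2,3}$ from the first step.

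The one genuinely nontrivial case is $x^{\sigma_1}$. Here I would invoke the conjugation rule $\sigma_i^{-1}A_{i,j}\sigma_i = A_{i+1,j}\,[A_{i,i+1}^{-1}, A_{i,j}^{-1}]$ for $j\neq i+1$ with $i=1,\,j=3$, which gives $x^{\sigma_1} = A_{2,3}\,[A_{1,2}^{-1}, A_{1,3}^{-1}] = A_{2,3}A_{1,2}A_{1,3}A_{1,2}^{-1}A_{1,3}^{-1}$. Cancelling and rearranging, the target equality $x^{\sigma_1} = xyx^{-1} = A_{1,3}A_{2,3}A_{1,3}^{-1}$ is equivalent to $A_{2,3}A_{1,2}A_{1,3} = A_{1,3}A_{2,3}A_{1,2}$, which is once more one of the cyclic identities. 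As an alternative that avoids the commutator, since $y^{\sigma_1}=x$ and $\sigma_1^2 = A_{1,2}$ one has $x^{\sigma_1} = y^{\sigma_1^2} = A_{1,2}^{-1}A_{2,3}A_{1,2}$, and the rule $A_{1,2}^{-1}A_{2,3}A_{1,2} = (A_{1,3}A_{2,3})A_{2,3}(A_{1,3}A_{2,3})^{-1}$ collapses directly to $A_{1,3}A_{2,3}A_{1,3}^{-1}$.

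I expect the main obstacle to be purely bookkeeping rather than conceptual: matching the outputs of the pure-braid conjugation formulas, which are phrased in the $A_{i,j}$, against the target expressions in $x,y,z$, and verifying that the commutator arising in $x^{\sigma_1}$ simplifies as claimed. No input beyond the centrality of the full twist and the defining relation of $P_3$ is needed, and every step is a finite manipulation inside $B_3$.
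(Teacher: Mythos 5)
Your proposal is correct and follows essentially the same route as the paper: the only nontrivial case, $x^{\sigma_1}$, is handled in both via the same pure-braid conjugation rule $\sigma_1^{-1}A_{1,3}\sigma_1 = A_{2,3}[A_{1,2}^{-1},A_{1,3}^{-1}]$, reduced using the centrality of $z$. The paper finishes with commutator-calculus identities (writing $A_{1,2}=zy^{-1}x^{-1}$ and expanding $[xy,x^{-1}]$) while you finish with the equivalent cyclic rewritings $A_{1,2}A_{1,3}A_{2,3}=A_{1,3}A_{2,3}A_{1,2}=A_{2,3}A_{1,2}A_{1,3}$, and you also write out the parts the paper dismisses as following easily; these are cosmetic differences only.
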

\begin{proof}
(1) follows trivially since $z \in Z(B_3)$. For (2), we consider
\begin{eqnarray*}
x^{\sigma_1} &=& A_{1,3}^{\sigma_1}\\
&=& A_{2,3}[A_{1,2}^{-1}, A_{1,3}^{-1}]\\
&=& [(zy^{-1}x^{-1})^{-1}, x^{-1}]\\
&=& y[xy, x^{-1}]=y[x, x^{-1}]^y[y, x^{-1}]\\
&=& y[y, x^{-1}]\\
&=&  xyx^{-1}.
\end{eqnarray*}
The rest of the identities follow easily.
\end{proof}

\begin{remark}
The preceding lemma also implies that the  automorphisms of $P_3$ induced by
$\sigma_1$ and $\sigma_2$ are, in fact, automorphisms of $F_2= \langle x,y \rangle$.
\end{remark}

Next, we describe $\Aut(P_3)$. Consider the following automorphisms of $P_3$
$$
\theta : \left\{
\begin{array}{ll}
x \longmapsto x &  \\
y \longmapsto y &  \\
z \longmapsto z^{-1},&\\

\end{array} \right.
\xi : \left\{
\begin{array}{ll}
x \longmapsto xz &  \\
y \longmapsto y &  \\
z \longmapsto z, &  \\
\end{array} \right.
$$

$$
\eta : \left\{
\begin{array}{ll}
x \longmapsto x &  \\
y \longmapsto yz &  \\
z \longmapsto z,&  \\
\end{array} \right.
\phi : \left\{
\begin{array}{ll}
x \longmapsto x^\psi &  \\
y \longmapsto y^\psi &  \\
z \longmapsto z, &  \\
\end{array} \right.
$$
where $\psi$ is an arbitrary automorphism of $F_2= \langle x,y \rangle$.

\begin{prop}
The group $\Aut(P_3)$ is generated by the set $\{\theta, \xi, \eta,  \phi \}$.
\end{prop}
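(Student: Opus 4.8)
The plan is to exploit the direct product decomposition $P_3 = \langle z \rangle \times F_2$ with $F_2 = \langle x, y \rangle$, and to argue that every automorphism of $P_3$ is determined by its action on $z$ together with its action modulo $z$, the latter being an automorphism of $F_2$. First I would record that $Z(P_3) = \langle z \rangle$ is characteristic, so any $\alpha \in \Aut(P_3)$ satisfies $\alpha(z) = z^{\pm 1}$ (the only generators of an infinite cyclic group). Composing with $\theta$ if necessary, I may assume $\alpha(z) = z$; this absorbs the $z \mapsto z^{-1}$ freedom into the generator $\theta$.

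Next I would analyze the induced map on the quotient $P_3 / \langle z \rangle \cong F_2$. Since $\alpha$ fixes $z$ and $\langle z \rangle$ is a direct factor, $\alpha$ descends to an automorphism $\bar\alpha$ of $F_2 = \langle x, y \rangle$. The generator $\phi$ is precisely the mechanism for realizing an arbitrary automorphism $\psi$ of $F_2$ (lifted so that $z \mapsto z$), so by composing $\alpha$ with a suitable $\phi$ I may further assume that $\bar\alpha$ is the identity on $F_2$. At this stage $\alpha$ fixes $z$ and sends $x \mapsto x z^{a}$, $y \mapsto y z^{b}$ for some integers $a, b \in \mathbb{Z}$, because the only ambiguity in lifting the identity of $F_2$ back to $P_3$ is multiplication of each free generator by a central element.

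The final step is to observe that the automorphisms $\xi$ and $\eta$ exactly produce the shifts $x \mapsto xz$ (with $y, z$ fixed) and $y \mapsto yz$ (with $x, z$ fixed), respectively. Hence $\xi^{a} \eta^{b}$ realizes the residual automorphism $x \mapsto x z^{a}$, $y \mapsto y z^{b}$, $z \mapsto z$. Reassembling, I conclude that $\alpha$ lies in the subgroup generated by $\theta, \phi, \xi, \eta$, which is what the proposition asserts. I would also check that the listed maps genuinely are automorphisms: each is visibly a well-defined endomorphism of $\langle z \rangle \times F_2$, and each has an obvious inverse of the same form ($\theta$ is an involution, $\xi$ and $\eta$ have inverses $x \mapsto xz^{-1}$ and $y \mapsto yz^{-1}$, and $\phi$ inherits invertibility from $\psi \in \Aut(F_2)$).

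The step I expect to require the most care is the reduction via $\phi$, namely verifying that an arbitrary automorphism $\bar\alpha$ of the quotient $F_2$ actually lifts to an automorphism of $P_3$ of the prescribed form $\phi$ with $z \mapsto z$; one must confirm that such a lift is well-defined and bijective, using that $P_3$ is literally the internal direct product $\langle z \rangle \times \langle x, y \rangle$ rather than merely an extension. Once the direct-factor structure is used to guarantee that lifting the identity leaves only the central shift ambiguity $x \mapsto xz^a$, $y \mapsto yz^b$, the remainder is the routine bookkeeping indicated above.
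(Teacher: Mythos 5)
Your proposal is correct and follows essentially the same route as the paper: use that $Z(P_3)=\langle z\rangle$ is characteristic to normalize $z\mapsto z$ via $\theta$, lift the induced automorphism of $P_3/Z(P_3)\cong F_2$ via $\phi$ (which works precisely because $P_3$ is the internal direct product $\langle z\rangle\times\langle x,y\rangle$), and absorb the remaining central shifts $x\mapsto xz^a$, $y\mapsto yz^b$ by $\xi^a\eta^b$. The only cosmetic difference is the order of normalization (you peel off $\theta$ first, the paper peels off $\phi$ first), which does not affect the argument.
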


\begin{proof}
It is enough to prove that any automorphism $f$ of $P_3$ can be
expressed in terms of these four automorphisms.
Since $Z(P_3)= \langle z \rangle$, it follows that $z^f=z^{\pm 1}$.
Further, $f$ induces an automorphism of $P_3/Z(P_3) \cong F_2$.
Hence $x^f \equiv x^{\psi} \mod Z(P_3)$ and $y^f \equiv y^{\psi} \mod Z(P_3)$
for some $\psi \in \Aut(F_2)$. Define $\phi: P_3 \to P_3$ by
$$
\phi : \left\{
\begin{array}{ll}
x \longmapsto x^{\psi} &  \\
y \longmapsto y^{\psi} &  \\
z \longmapsto z. &  \\
\end{array} \right.
$$
Then the automorphism $f  \phi^{-1}$ induce identity on $P_3/Z(P_3)$. Hence
$$
f  \phi^{-1} : \left\{
\begin{array}{ll}
x \longmapsto xz^{\alpha} &  \\
y \longmapsto yz^{\beta} &  \\
z \longmapsto z^{\pm 1} &  \\
\end{array} \right.
$$
for some $\alpha, \beta \in \mathbb{Z}$. Hence $f= \theta \xi^{\alpha} \eta^{\beta} \phi$.
This proves the lemma.
\end{proof}

Let $x \mapsto p(x,y)$ and $y \mapsto q(x,y)$ be an arbitrary automorphism of $F_2= \langle x,y \rangle $.

\begin{lemma}
The group $\Aut(P_3)$ has following defining relations:
\begin{enumerate}
\item $\theta^2=1$
\item $\xi \eta = \eta \xi$
\item $\theta \xi \theta =\xi^{-1}$
\item $\theta \eta \theta =\eta^{-1}$
\item $\theta \phi= \phi \theta$ for all $\phi \in \Aut(F_2)$
\item $\xi \phi \xi^{-1}=\xi^{1-\log_x(p)} \eta^{-\log_x(q)} \phi$
\item $\eta \phi \eta^{-1}=\xi^{-\log_y(p)} \eta^{1-\log_y(q)} \phi$.
\end{enumerate}
\end{lemma}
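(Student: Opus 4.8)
The plan is to realize $\Aut(P_3)$ as an internal semidirect product and then read off the relations from the standard presentation of such a product. Since $Z(P_3)=\langle z\rangle$ is characteristic, every $f\in\Aut(P_3)$ descends to an automorphism of $P_3/Z(P_3)\cong F_2=\langle x,y\rangle$, giving a restriction homomorphism $\rho\colon\Aut(P_3)\to\Aut(F_2)$. I would first check that $\rho$ is surjective and split, the section being $\psi\mapsto\phi$ where $\phi=\mathrm{id}_{\langle z\rangle}\times\psi$ is exactly the automorphism appearing in the statement; this is a homomorphism precisely because $P_3=\langle z\rangle\times\langle x,y\rangle$ is a direct product. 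Next I would identify $\ker\rho$: an automorphism inducing the identity on $P_3/Z(P_3)$ is by definition a central automorphism, so $\ker\rho=\Aut_c(P_3)$, and the description $x\mapsto xz^{a}$, $y\mapsto yz^{b}$, $z\mapsto z^{\pm1}$ of a central automorphism shows $\Aut_c(P_3)=\langle\theta,\xi,\eta\rangle\cong\mathbb{Z}^2\rtimes\mathbb{Z}_2$, the subgroup $\langle\xi,\eta\rangle$ being free abelian of rank $2$ and $\theta$ (the only generator moving $z$) giving the $\mathbb{Z}_2$ acting by inversion. This yields $\Aut(P_3)\cong\Aut_c(P_3)\rtimes\Aut(F_2)$.

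With this structure in hand, relations (1)--(4) are merely a presentation of $\Aut_c(P_3)\cong\mathbb{Z}^2\rtimes\mathbb{Z}_2$, while (5)--(7) encode the conjugation action of the section $\Aut(F_2)$ on the normal subgroup $\Aut_c(P_3)$. I would verify these by direct evaluation on $x,y,z$, using the composition convention $a^{fg}=(a^f)^g$ (this choice is forced: under the opposite convention relation (6) would demand $\log_x(p)^2+\log_x(q)\log_y(p)=1$, which fails in general). The only computation with content is the exponent bookkeeping: since $z$ is central, applying $\xi$ to a word $w(x,y)$ replaces each $x$ by $xz$ and hence yields $w(x,y)\,z^{\log_x(w)}$, and similarly $\eta$ contributes $z^{\log_y(w)}$. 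Feeding $\phi(x)=p$ and $\phi(y)=q$ through this gives $\phi\xi\phi^{-1}=\xi^{\log_x(p)}\eta^{\log_x(q)}$ and $\phi\eta\phi^{-1}=\xi^{\log_y(p)}\eta^{\log_y(q)}$, which are exactly relations (6) and (7) after rearranging using (2); relation (5) just records that $\phi$ and $\theta$ commute, being $\mathrm{id}\times\psi$ and $\mathrm{inv}\times\mathrm{id}$ on the two direct factors.

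To see that (1)--(7) are \emph{defining}, I would invoke the standard presentation of a semidirect product $N\rtimes H$: combining the generators and relations of $N=\Aut_c(P_3)$ (namely $\theta,\xi,\eta$ with (1)--(4)), those of $H=\Aut(F_2)$ (the symbols $\phi$ with the multiplication of $\Aut(F_2)$, understood implicitly in the statement), and the action relations expressing $\phi g\phi^{-1}$ for $g\in\{\theta,\xi,\eta\}$ as words in $\theta,\xi,\eta$ (namely (5)--(7)) gives a presentation of $\Aut(P_3)$. Concretely, (5)--(7) let one push every $\phi$ to the right past any $\theta,\xi,\eta$, and then (1)--(4) collect the remainder, so that every word reduces to a normal form $\theta^{\delta}\xi^{\alpha}\eta^{\beta}\phi$ with $\delta\in\{0,1\}$ and $\alpha,\beta\in\mathbb{Z}$; since the semidirect decomposition shows these normal forms biject with the elements of $\Aut(P_3)$, the natural surjection from the presented group is injective. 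I expect the two delicate points to be, on the computational side, tracking the powers of $z$ in (6)--(7) together with the composition convention that makes the exponents come out as written, and, on the structural side, making precise the clause that the internal relations of $\Aut(F_2)$ are assumed, i.e. reading (1)--(7) as a presentation relative to the known group $\Aut(F_2)$ rather than an absolute one.
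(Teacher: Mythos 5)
Your proposal is correct, and its computational core --- tracking the powers of $z$ through word substitution via the exponent sums $\log_x, \log_y$, under the left-to-right composition convention $a^{fg}=(a^f)^g$ --- is exactly the computation the paper performs: the paper verifies (6) by evaluating $x^{\xi\phi\xi^{-1}}$ and $y^{\xi\phi\xi^{-1}}$ directly, whereas you compute the equivalent conjugate form $\phi\xi\phi^{-1}=\xi^{\log_x(p)}\eta^{\log_x(q)}$, which the paper itself records as (6\') in the remark immediately following the lemma; the two are interchangeable using relation (2). Where you genuinely go beyond the paper is on the word \emph{defining}: the paper's proof only checks that the relations hold ((1)--(5) are declared evident, (7) analogous) and never argues completeness, which is left implicit in the preceding proposition showing that every $f\in\Aut(P_3)$ factors as $\theta^{\delta}\xi^{\alpha}\eta^{\beta}\phi$. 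Your identification of $\ker\rho=\Aut_c(P_3)=\langle\theta,\xi,\eta\rangle\cong\mathbb{Z}^2\rtimes\mathbb{Z}_2$, the splitting $\psi\mapsto\mathrm{id}\times\psi$ (a homomorphism precisely because $P_3=\langle z\rangle\times\langle x,y\rangle$), and the standard semidirect-product presentation together with the normal-form argument supply precisely this missing half; your caveat that the presentation must be read relative to $\Aut(F_2)$, with its internal multiplication assumed, is also the right way to make the lemma's statement precise, since (5)--(7) quantify over all $\phi\in\Aut(F_2)$. Finally, your side remark on the convention is accurate: under right-to-left composition, relation (6) would force $\log_x(p)^2+\log_x(q)\log_y(p)=1$, which fails, for instance, for the automorphism $x\mapsto xy$, $y\mapsto x$.
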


\begin{proof}
(1)-(5) are evident from the definitions of the automorphisms. For (6), consider
\begin{eqnarray*}
x^{\xi \phi \xi^{-1}} &=& (xz)^{\phi \xi^{-1}}\\
& = & \big(p(x,y)z\big)^{\xi^{-1}}\\
& = & p(xz^{-1}, y)z\\
& = & p(x,y)z^{1-\log_x(p)}\\
& = & x^{\xi^{1-\log_x(p)} \eta^{-\log_x(q)} \phi}.
\end{eqnarray*}

Similarly, for $y$, we obtain
\begin{eqnarray*}
y^{\xi \phi \xi^{-1}} &=& y^{\phi \xi^{-1}}\\
& = & \big(q(x,y)\big)^{\xi^{-1}}\\
& = & q(xz^{-1}, y)\\
& = & q(x,y)z^{1-\log_x(q)}\\
& = & y^{\xi^{1-\log_x(p)} \eta^{-\log_x(q)} \phi}.
\end{eqnarray*}
The identity (7) follows analogously.
\end{proof}

\begin{remark}
Conditions (6) and (7) can be rewritten in the following form:
\begin{enumerate}
\item[(6\')] $\phi \xi \phi^{-1}=\xi^{\log_x(p)} \eta^{\log_x(q)}$.
\item[(7\')] $\phi \eta \phi^{-1}=\xi^{\log_y(p)} \eta^{\log_y(q)}$.
\end{enumerate}
\end{remark}

The above relations yield the following lemma.

\begin{lemma}
$\langle \xi, \eta \rangle $ is a normal subgroup of $\Aut(P_3)$ and
$\Aut(P_3)/\langle \xi, \eta \rangle  \cong \mathbb{Z}_2 \times \Aut(F_2)$.
\end{lemma}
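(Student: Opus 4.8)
The plan is to show two things: that $\langle \xi, \eta \rangle$ is normal in $\Aut(P_3)$, and that the quotient is $\mathbb{Z}_2 \times \Aut(F_2)$. For normality, I would work with the generating set $\{\theta, \xi, \eta, \phi\}$ from the earlier proposition and verify that conjugating each of $\xi$ and $\eta$ by each generator lands back inside $\langle \xi, \eta \rangle$. Relation (2) shows $\xi$ and $\eta$ commute, so among themselves there is nothing to check. Relations (3) and (4) give $\theta \xi \theta = \xi^{-1}$ and $\theta \eta \theta = \eta^{-1}$, so conjugation by $\theta$ preserves the subgroup. The two remaining cases are conjugation by an arbitrary $\phi \in \Aut(F_2)$, and these are precisely handled by the rewritten relations (6') and (7'), namely $\phi \xi \phi^{-1} = \xi^{\log_x(p)}\eta^{\log_x(q)}$ and $\phi \eta \phi^{-1} = \xi^{\log_y(p)}\eta^{\log_y(q)}$, whose right-hand sides lie in $\langle \xi, \eta \rangle$. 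Since normality under a generating set (and their inverses) implies normality, this establishes that $\langle \xi, \eta \rangle \trianglelefteq \Aut(P_3)$.

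For the quotient, I would observe that $\langle \xi, \eta \rangle$ is free abelian of rank $2$ (it consists of the central-type maps $x \mapsto xz^\alpha$, $y \mapsto yz^\beta$ fixing $z$, parametrized by $(\alpha, \beta) \in \mathbb{Z}^2$), and pass to the quotient by killing $\xi$ and $\eta$. In $\Aut(P_3)/\langle \xi, \eta \rangle$, relations (6) and (7) collapse to $\xi \phi \xi^{-1} \equiv \phi$ and $\eta \phi \eta^{-1} \equiv \phi$, which carry no new information once $\xi, \eta$ are trivial; relations (3) and (4) also become trivial. What survives is relation (1), $\theta^2 = 1$, and relation (5), $\theta \phi = \phi \theta$ for all $\phi \in \Aut(F_2)$. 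Thus the images of $\theta$ and of the $\phi$'s generate the quotient, with $\theta$ of order $2$, commuting with every $\phi$, and the $\phi$'s among themselves reproducing exactly the multiplication of $\Aut(F_2)$.

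I would then identify the quotient explicitly. The subgroup generated by the images of all $\phi$ is isomorphic to $\Aut(F_2)$: the assignment $\phi \mapsto \psi$ (where $\psi$ is the underlying automorphism of $F_2 = \langle x, y \rangle$) is a homomorphism from $\langle \phi : \psi \in \Aut(F_2)\rangle$ onto $\Aut(F_2)$, and modulo $\langle \xi, \eta \rangle$ it is injective because two lifts $\phi, \phi'$ of the same $\psi$ differ only by a map fixing $x, y$ modulo $Z(P_3)$, i.e.\ by an element of $\langle \xi, \eta \rangle$. Meanwhile $\langle \theta \rangle \cong \mathbb{Z}_2$ intersects this subgroup trivially (since $\theta$ inverts $z$ whereas every $\phi$ fixes $z$, and no nontrivial $\phi$ can equal $\theta$ modulo $\langle \xi, \eta \rangle$), and by relation (5) the two commute. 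Hence the quotient is the internal direct product $\mathbb{Z}_2 \times \Aut(F_2)$.

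The main obstacle I anticipate is the injectivity claim in the last paragraph, that is, checking that the kernel of the natural map $\Aut(P_3) \to \mathbb{Z}_2 \times \Aut(F_2)$ is exactly $\langle \xi, \eta \rangle$ rather than something larger. This amounts to verifying that an automorphism acting trivially on both $Z(P_3)$ (preserving the sign of $z$) and on $P_3/Z(P_3) \cong F_2$ must be of the form $x \mapsto xz^\alpha$, $y \mapsto yz^\beta$, $z \mapsto z$, which is precisely the structure uncovered in the proof of the generation proposition. Once that bookkeeping is in place, the direct-product decomposition follows formally from relations (1) and (5).
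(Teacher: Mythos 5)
Your proposal is correct and follows essentially the same route the paper intends: the paper simply asserts that the relations (1)--(7) (with (6'), (7')) yield the lemma, and your argument is exactly that implicit one --- relations (3), (4), (6'), (7') give normality of $\langle \xi, \eta \rangle$, and killing $\xi, \eta$ leaves $\theta^2=1$ and $\theta\phi=\phi\theta$, so the quotient is $\mathbb{Z}_2 \times \Aut(F_2)$. Your extra care in identifying the kernel of $\Aut(P_3) \to \mathbb{Z}_2 \times \Aut(F_2)$ with $\langle \xi, \eta \rangle$ (via the structure found in the generation proposition) fills in precisely the detail the paper leaves unstated.
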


Recall that $P_3 = \mathbb{Z} \times F_2$. In this case,
$\Aut(P_3)= \Aut_c(P_3) \times \Aut(F_2)$, where $\Aut_c(P_3) =\mathbb{Z}^2 \rtimes \mathbb{Z}_2$,
generated by $\psi, \phi_{1,3}, \phi_{2,3}$ with $\psi^2=1$ and
$\psi \phi_{i,j} \psi=  \phi_{i,j}^{-1}$. The generators are given by

$$
\psi :\left\{
\begin{array}{ll}
A_{1,2} \mapsto A_{1,2}z^{-2} &\\
A_{1,3} \mapsto A_{1,3}&\\
A_{2,3} \mapsto A_{2,3},&\\
\end{array} \right.
$$
$$
\phi_{1,3} : \left\{
\begin{array}{ll}
A_{1,2} \mapsto A_{1,2}z&\\
A_{1,3} \mapsto A_{1,3}z^{-1}&\\
A_{2,3} \mapsto A_{2,3}&\\
\end{array} \right.
$$
and
$$
\phi_{2,3} : \left\{
\begin{array}{ll}
A_{1,2} \mapsto A_{1,2}z&\\
A_{1,3} \mapsto A_{1,3}&\\
A_{2,3} \mapsto A_{2,3}z^{-1}.&\\
\end{array} \right.
$$

Note that
$F_2=\overline{P}_3= P_3/Z(P_3)=P_3/\langle A_{1,2}A_{1,3}A_{2,3} \rangle= \langle A_{1,3},A_{2,3} \rangle$.
The group $\Aut(F_2)$ admits the following presentation
$$
\Aut(F_2)=
 \big\langle \rho, \sigma, \nu~|~\rho^2, \sigma^2, (\sigma\rho)^4, (\rho \sigma \rho \nu)^2,
   (\nu \rho \sigma)^3, [\nu, \sigma \nu \sigma] \big\rangle,
$$
where the automorphisms are given by

$$
\rho :\left\{
\begin{array}{ll}
A_{1,3} \mapsto A_{2,3}&\\
A_{2,3} \mapsto A_{1,3},&\\
\end{array} \right.
$$

$$\sigma :\left\{
\begin{array}{ll}
A_{1,3} \mapsto A_{1,3}^{-1}&\\
A_{2,3} \mapsto A_{2,3}&\\
\end{array} \right.
$$
and
$$\nu :\left\{
\begin{array}{ll}
A_{1,3} \mapsto A_{1,3}A_{2,3}&\\
A_{2,3} \mapsto A_{2,3}.&\\
\end{array} \right.
$$

The lifts of these automorphisms to automorphisms of $P_3$ fixing the
$A_{1,2}A_{1,3}A_{2,3}$ are given by setting
$$
\rho(A_{1,2})=A_{2,3}A_{1,3}A_{2,3}^{-1},~\sigma(A_{1,2})=A_{1,2}A_{1,3}^2~ \textrm{and}~\nu(A_{1,2})= A_{2,3}^{-1}A_{1,2}.
$$
 Thus, we have proved the following.

\begin{prop}\label{main-result-p3}
The group $\Aut(P_3)$ is generated by the set $\{\rho, \sigma, \nu, \psi, \phi_{1,3}, \phi_{2,3}\}.$
\end{prop}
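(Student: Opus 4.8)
The plan is to prove Proposition~\ref{main-result-p3} by combining the structural decomposition $P_3 \cong \Z \times F_2$ with the explicit generators that have already been assembled. The key observation is that the earlier result $\Aut(P_3) = \Aut_c(P_3) \times \Aut(F_2)$ reduces the task to exhibiting generating sets for each of the two direct factors and then lifting the second factor back to genuine automorphisms of $P_3$. Since $\Aut_c(P_3) \cong \Z^2 \rtimes \Z_2$ is generated by $\psi,\phi_{1,3},\phi_{2,3}$ (as recorded in Section~\ref{sec2}, specialized to $n=3$), and $\Aut(F_2)$ is generated by $\rho,\sigma,\nu$ via the standard Nielsen presentation, the heart of the matter is merely to verify that the six listed maps indeed generate everything.

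First I would record that $F_2 = \overline{P}_3 = P_3/Z(P_3) = \langle A_{1,3},A_{2,3}\rangle$, so that $\rho,\sigma,\nu$ are bona fide automorphisms of $F_2$, and cite the Nielsen presentation to conclude $\Aut(F_2) = \langle \rho,\sigma,\nu\rangle$. Next I would address the lifting step: each of $\rho,\sigma,\nu$ acts on $\overline{P}_3$, and to promote it to an automorphism of $P_3$ one must specify its value on $A_{1,2}$ subject to the constraint that the central element $z = A_{1,2}A_{1,3}A_{2,3}$ is preserved. The prescribed values $\rho(A_{1,2}) = A_{2,3}A_{1,3}A_{2,3}^{-1}$, $\sigma(A_{1,2}) = A_{1,2}A_{1,3}^2$, and $\nu(A_{1,2}) = A_{2,3}^{-1}A_{1,2}$ are exactly the unique choices forcing $z \mapsto z$; I would check this by a direct substitution, e.g. for $\rho$ one computes $\rho(z) = \rho(A_{1,2})\rho(A_{1,3})\rho(A_{2,3}) = (A_{2,3}A_{1,3}A_{2,3}^{-1})A_{2,3}A_{1,3} = A_{2,3}A_{1,3}A_{1,3}$ and compares with $z = A_{1,2}A_{1,3}A_{2,3}$ after re-expressing $A_{1,2}$. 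Having these lifts fix $z$, they land in the complementary factor $\Aut(F_2) \le \Aut(P_3)$ of the direct product decomposition.

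Finally I would assemble the conclusion: given an arbitrary $f \in \Aut(P_3)$, the decomposition $\Aut(P_3) = \Aut_c(P_3) \times \Aut(F_2)$ writes $f = f_c \cdot f_{F_2}$ with $f_c \in \Aut_c(P_3) = \langle \psi,\phi_{1,3},\phi_{2,3}\rangle$ and $f_{F_2} \in \Aut(F_2) = \langle \rho,\sigma,\nu\rangle$, so $f$ lies in the subgroup generated by $\{\rho,\sigma,\nu,\psi,\phi_{1,3},\phi_{2,3}\}$. This is essentially the content of the phrase "Thus, we have proved the following" preceding the statement.

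The main obstacle I anticipate is the lifting step: one must confirm both that the proposed values on $A_{1,2}$ are well defined (respect the relations of $P_3$, equivalently that the map on generators extends to an endomorphism) and that each lift is invertible with $z \mapsto z$. The invertibility is automatic once one knows the induced map on $\overline{P}_3 = F_2$ is an automorphism and $z$ is fixed, since then the map respects the direct product $\Z \times F_2$ factorwise; the only real computation is the routine verification that $z$ is preserved, which is the check I sketched above. Everything else is bookkeeping against the already-established structural results.
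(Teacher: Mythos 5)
Your overall strategy coincides with the paper's own route: the (unnamed) proposition earlier in Section~\ref{sec4} shows that every $f\in\Aut(P_3)$ is the product of a central automorphism and a lift, fixing $z$, of the automorphism induced on $F_2=P_3/Z(P_3)$; combining this with $\Aut_c(P_3)=\langle\psi,\phi_{1,3},\phi_{2,3}\rangle$ and Nielsen's generation of $\Aut(F_2)$ by $\rho,\sigma,\nu$ is exactly what the paper does. One small caveat: the decomposition is a semidirect rather than direct product --- the paper's own relations $\phi\xi\phi^{-1}=\xi^{\log_x(p)}\eta^{\log_x(q)}$ and $\phi\eta\phi^{-1}=\xi^{\log_y(p)}\eta^{\log_y(q)}$ show the lifted copy of $\Aut(F_2)$ acts nontrivially on $\langle\xi,\eta\rangle$ --- but since all you need is the factorization $f=f_c\, f_{F_2}$, this does not harm the generation argument.

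There is, however, a genuine gap at the step you single out as ``the only real computation.'' The verification that the displayed value of $\rho$ on $A_{1,2}$ fixes $z$ fails: your own computation gives $\rho(z)=A_{2,3}A_{1,3}^2$, and no ``re-expressing of $A_{1,2}$'' makes this equal to $z$, since in the abelianization $P_3/P_3'$ (free abelian on $A_{1,2},A_{1,3},A_{2,3}$) these elements have exponent vectors $(0,2,1)$ and $(1,1,1)$ respectively. Worse, the assignment $A_{1,2}\mapsto A_{2,3}A_{1,3}A_{2,3}^{-1}$, $A_{1,3}\mapsto A_{2,3}$, $A_{2,3}\mapsto A_{1,3}$ does not even extend to an endomorphism of $P_3$: the relation $[A_{1,2}A_{1,3}A_{2,3},\,A_{1,3}]=1$ would be sent to $[A_{2,3}A_{1,3}^2,\,A_{2,3}]=1$, which is false in the free group $\langle A_{1,3},A_{2,3}\rangle$. (This formula is an error in the paper itself, which your proposal inherits and then incorrectly certifies; the values for $\sigma$ and $\nu$ are correct.) The repair is exactly your uniqueness remark, actually carried out: the constraint $\rho(A_{1,2})\rho(A_{1,3})\rho(A_{2,3})=z$ forces $\rho(A_{1,2})=z\,\rho(A_{2,3})^{-1}\rho(A_{1,3})^{-1}=zA_{1,3}^{-1}A_{2,3}^{-1}=A_{1,3}^{-1}A_{1,2}A_{1,3}$. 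Alternatively one can avoid all computation by defining the lift of each Nielsen generator factor-wise on $P_3=\langle z\rangle\times\langle A_{1,3},A_{2,3}\rangle$ ($z\mapsto z$, and the Nielsen map on the free factor), as the paper does for its automorphisms $\phi$; since any two automorphisms of $P_3$ inducing the same automorphism of $P_3/Z(P_3)$ differ by an element of $\Aut_c(P_3)$, the generation statement follows with any choice of lifts. With that correction the rest of your assembly is sound.
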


\bigskip

\section{Extension and lifting problem for  $1 \to U_n \to P_n \to P_{n-1} \to 1$}\label{sec5}

In this section, we deal the extension and lifting problem for the exact sequence
$$
1 \to U_n \to P_n \to P_{n-1} \to 1.
$$
The cases $n=1, 2$ are vacuous. For $n=3$, we have $P_3 \cong U_3 \times  P_2$, and hence each automorphism of $U_3$ and $P_2$ can be extended to an automorphism of $P_3$. 

We deal the case $n=4$ in the rest of this section. Recall that, by definition, $P_4= \langle A_{1,2}, A_{1,3}, A_{2,3}, A_{1,4}, A_{2,4}, A_{3,4} \rangle$ and $P_3= \langle A_{1,2}, A_{1,3}, A_{2,3} \rangle$. Further, $U_4= \langle A_{1,4}, A_{2,4}, A_{3,4} \rangle$ is normal in $P_4$ and $P_4/U_4 \cong P_3$. The main result of this section is the following theorem.

\begin{theorem}\label{not liftable auto}
There exists an automorphism of $P_3$ which cannot be lifted to an automorphism of $P_4$.
\end{theorem}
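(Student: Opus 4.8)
The plan is to exhibit one explicit automorphism of $P_3$ and to obstruct its lifting by pushing everything into a two-step nilpotent quotient of $U_4$, where a rigid quadratic-form invariant survives.

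First I would record what a lift must look like. Since $P_4=U_4\rtimes P_3$ with $U_4=\langle A_{1,4},A_{2,4},A_{3,4}\rangle$ free of rank $3$, any $\Phi\in\Aut(P_4)$ inducing $\phi$ on $P_4/U_4$ must satisfy $\Phi(U_4)=U_4$, hence must preserve every characteristic subgroup of $U_4$, in particular $\gamma_3(U_4)$. Thus $\Phi$ descends to an automorphism of $P_4/\gamma_3(U_4)=N\rtimes P_3$, where $N=U_4/\gamma_3(U_4)$ is the free $2$-step nilpotent group of rank $3$; it restricts to some $\psi\in\Aut(N)$ and induces $\phi$ on $P_3$. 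Writing the conjugation action as $\alpha\colon P_3\to\Aut(U_4)$ and $\widehat u$ for inner automorphisms, compatibility of $\Phi$ with the semidirect-product structure forces, for every $g\in P_3$, an identity $\psi\,\alpha_g\,\psi^{-1}=\widehat{\delta(g)}\,\alpha_{\phi(g)}$ in $\Aut(N)$, for a suitable map $\delta\colon P_3\to N$.

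Next I would compute the invariant. From the conjugation rules in \secref{sec2} one checks that $\alpha$ acts trivially on $H:=U_4/U_4'\cong\Z^3$, so each $\alpha_g$ merely pushes $H$ into $\gamma_2(N)=\wedge^2 H$; this mixing is measured by a homomorphism $\tau$ into $\Hom(H,\wedge^2 H)$, which, being valued in an abelian group, factors through $P_3/P_3'$. A direct calculation gives, with $a,b,c$ the images of $A_{1,4},A_{2,4},A_{3,4}$,
$$\tau(\alpha_{A_{1,2}})\colon a\mapsto -a\wedge b,\ b\mapsto a\wedge b,\ c\mapsto 0,$$
and analogous formulas for $A_{1,3},A_{2,3}$. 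Modulo inner automorphisms of $N$ (the subspace $\{x\mapsto x\wedge v\}$) the quotient $\Hom(H,\wedge^2 H)$ becomes the $GL(H)$-module $\Sym^2(H^\ast)\otimes\det H$, and the three classes turn into the quadratic forms
$$\bar T_{1,2}=z(y-x),\qquad \bar T_{1,3}=y(x-z),\qquad \bar T_{2,3}=x(z-y),$$
in dual coordinates $x,y,z$, with $\bar T_{1,2}+\bar T_{1,3}+\bar T_{2,3}=0$ (reflecting that $z_3$ acts innerly on $U_4$). Applying $\tau$ to the compatibility identity, a lift of $\phi$ forces an $M=\psi^{\,ab}\in GL_3(\Z)$ with $\Xi(M)\,\bar T_g\equiv\bar T_{\phi(g)}$ for all generators $g$, where $\Xi$ is the natural $GL_3(\Z)$-action on $\Sym^2(H^\ast)\otimes\det H$ and $\bar T_{\phi(g)}$ is read off from $\phi$ on $P_3/P_3'$ by linearity.

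Now I would choose $\phi$ and finish. Take $\phi$ to be the identity on $Z(P_3)$ and the transvection $\nu\colon A_{1,3}\mapsto A_{1,3}A_{2,3},\ A_{2,3}\mapsto A_{2,3}$ on $\overline{P}_3=F_2=\langle A_{1,3},A_{2,3}\rangle$. A short computation shows the required map must fix $\bar T_{2,3}$ and send $\bar T_{1,3}\mapsto \bar T_{1,3}+\bar T_{2,3}=-\bar T_{1,2}$, i.e.\ it must act on the $2$-dimensional pencil $W=\langle\bar T_{1,2},\bar T_{1,3},\bar T_{2,3}\rangle$ as the infinite-order transvection $\left(\begin{smallmatrix}1&0\\1&1\end{smallmatrix}\right)$ in the basis $\bar T_{1,3},\bar T_{2,3}$. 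But every $\Xi(M)$ carries reducible (rank $\le 2$) forms to reducible forms, and restricting the discriminant to $W$ shows that its reducible members are exactly the three points $[\bar T_{1,2}],[\bar T_{1,3}],[\bar T_{2,3}]\in\mathbb{P}(W)$; hence any $\Xi(M)$ preserving $W$ must permute these three points. The transvection above, however, sends $[\bar T_{1,2}]$ to $[\bar T_{1,2}-\bar T_{2,3}]$, a fourth point of $\mathbb{P}(W)$, so it permutes nothing. This contradiction shows no such $M$ exists, and therefore $\nu$ (extended by the identity on the center) is an automorphism of $P_3$ that does not lift.

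The main obstacle is twofold. The technical heart is the bookkeeping of the second paragraph: checking that the $P_3$-action is trivial on $U_4/U_4'$ and computing the three classes $\bar T_{i,j}$ correctly from the conjugation rules, where sign and index errors are easy. The conceptual crux is then to isolate a single $GL_3(\Z)$-invariant of the pencil $W$ that the chosen $\phi$ violates; invariance of the set of reducible forms is such an invariant, and the key observation that the three $\bar T_{i,j}$ are \emph{precisely} the reducible members of $W$ converts the infinite search over $M\in GL_3(\Z)$ into the finite assertion that the induced map must permute three fixed points, which the transvection manifestly fails.
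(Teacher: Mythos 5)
Your proposal is correct, and it takes a genuinely different route from the paper's proof, even though both obstruct the lifting of essentially the same automorphism $\phi$ (the transvection $A_{1,3}\mapsto A_{1,3}A_{2,3}$, $A_{2,3}\mapsto A_{2,3}$, extended so as to fix the centre). The paper stays inside the free group $U_4$: using the centralizer Lemma~\ref{centraliser} and the normal-form Lemma~\ref{fix set}, it computes that $\mathrm{Fix}(\widehat{A_{1,3}})=\langle A_{1,4}A_{3,4},\,A_{1,4}A_{2,4}A_{3,4}\rangle$ is free of rank $2$ while $\mathrm{Fix}(\widehat{A_{1,3}A_{2,3}})=\langle A_{1,4}A_{2,4}A_{3,4}\rangle$ is infinite cyclic, so a lift, which must carry one fixed subgroup onto the other, cannot exist. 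You instead kill $\gamma_3(U_4)$ and work with the Johnson-type homomorphism into $\Hom(H,\wedge^2 H)$ and its quotient $\Sym^2(H^*)\otimes\det H$ by inner automorphisms; I verified your key computations: the $P_3$-action is indeed trivial on $U_4/U_4'$, the three classes are $z(y-x)$, $y(x-z)$, $x(z-y)$ with sum zero, the discriminant restricted to the pencil $W$ is proportional to $st(s-t)$ so its reducible members are exactly the three given points, and the abelianised $\phi$ sends $[A_{1,2}]\mapsto[A_{1,2}]-[A_{2,3}]$, forcing the transvection you describe (note the forced images do sum to zero, so no cheaper linearity contradiction is available and your reducibility invariant is genuinely needed: the contradiction is that $\bar T_{1,2}$, a singular conic, is forced to go to the nonsingular member $\bar T_{1,2}-\bar T_{2,3}$). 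Comparing the two: the paper's argument is more elementary (only free-product normal forms), but as written it identifies the lift's effect only up to an element of $U_4$ — a lift satisfies $\Phi(A_{1,3})=A_{1,3}A_{2,3}u$ for some $u\in U_4$, and the fixed subgroups of $\widehat{A_{1,3}A_{2,3}u}$ for $u\neq 1$ are not discussed — whereas your formulation quotients out exactly this ambiguity (the term $\widehat{\delta(g)}$ dies in $\Sym^2(H^*)\otimes\det H$), making the proof insensitive to it. The price is heavier machinery (IA-automorphisms of free nilpotent groups, a little $GL_3(\Z)$-module theory), but everything reduces to finite-dimensional linear algebra, and the method looks well suited to the paper's open question about $1\to U_n\to P_n\to P_{n-1}\to 1$ for $n\ge 5$.
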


The theorem will be proved via the following sequence of lemmas.

\begin{lemma}\label{centraliser}
Let $G$ be a group and $\phi \in \Aut(G)$. For $g \in G$, let
$\C_G(g)= \{x \in G~|~ gx=xg \}$. Then $\C_G(g^{\phi})= \C_G(g)^{\phi}$.
\end{lemma}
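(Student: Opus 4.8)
The plan is to establish the asserted equality by proving the two inclusions $\C_G(g)^\phi \subseteq \C_G(g^\phi)$ and $\C_G(g^\phi) \subseteq \C_G(g)^\phi$, relying only on the fact that $\phi$ is a bijective homomorphism; here $g^\phi$ denotes $\phi(g)$ and $S^\phi = \{\,\phi(s) : s \in S\,\}$ for a subset $S \subseteq G$. The forward inclusion uses nothing beyond multiplicativity of $\phi$: if $x \in \C_G(g)$, then $gx = xg$, and applying $\phi$ gives $g^\phi x^\phi = x^\phi g^\phi$, so $x^\phi \in \C_G(g^\phi)$. This already yields $\C_G(g)^\phi \subseteq \C_G(g^\phi)$.

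For the reverse inclusion I would invoke surjectivity and injectivity of $\phi$. Given $y \in \C_G(g^\phi)$, surjectivity lets me write $y = x^\phi$ for some $x \in G$. The centralizing relation $y\, g^\phi = g^\phi\, y$ then reads $x^\phi g^\phi = g^\phi x^\phi$, that is $(xg)^\phi = (gx)^\phi$; injectivity of $\phi$ forces $xg = gx$, so $x \in \C_G(g)$ and hence $y = x^\phi \in \C_G(g)^\phi$. Combining the two inclusions gives $\C_G(g^\phi) = \C_G(g)^\phi$.

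There is no real obstacle here: the statement is a routine consequence of $\phi$ being an isomorphism, and in fact the whole argument can be compressed into a single chain of equivalences by writing an arbitrary element of $G$ as $x^\phi$ and noting that $x^\phi \in \C_G(g^\phi)$ if and only if $x \in \C_G(g)$. The only point requiring mild care is to match each implication with the correct property of $\phi$ — surjectivity to produce a preimage and injectivity to cancel $\phi$ — since the forward inclusion needs only that $\phi$ is a homomorphism while the converse genuinely uses bijectivity.
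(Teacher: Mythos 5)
Your proof is correct and follows essentially the same approach as the paper: the forward inclusion $\C_G(g)^\phi \subseteq \C_G(g^\phi)$ is argued identically by applying $\phi$ to the relation $gx = xg$, and your reverse inclusion (via surjectivity and injectivity of $\phi$) simply spells out the step the paper dismisses as ``the converse is also obvious.'' Nothing further is needed.
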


\begin{proof}
Let $x \in \C_G(g)$. Then $xg=gx$ implies that $x^\phi g^\phi=g^\phi x^\phi$.
This further implies $x^\phi \in \C_G(g^\phi)$. Hence $\C_G(g)^\phi \subseteq \C_G(g^ \phi)$.
The converse is also obvious.
\end{proof}

\begin{lemma}\label{fix set}
Let $F_n= \langle x_1, \dots, x_n\rangle $, and $\phi \in \Aut(F_n)$ such that
$$
\phi : \left\{
\begin{array}{ll}
x_i \longmapsto x_i & \textrm{for}~ 1 \leq i \leq n-1 \\
x_n \longmapsto w^{-1} x_n w &  \textrm{where }~w \in \langle x_1, \dots, x_{n-1} \rangle. \\
\end{array} \right.
$$
Then $Fix(\phi)=F_{n-1}=\langle x_1, \dots, x_{n-1} \rangle$.
\end{lemma}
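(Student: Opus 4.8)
The plan is to work inside the free product decomposition $F_n = F_{n-1} * \langle x_n \rangle$, where $F_{n-1} = \langle x_1, \dots, x_{n-1}\rangle$ and $\langle x_n\rangle \cong \mathbb{Z}$, and to exploit uniqueness of reduced normal forms. The inclusion $F_{n-1} \subseteq \mathrm{Fix}(\phi)$ is immediate since $\phi$ fixes each of $x_1,\dots,x_{n-1}$, so the whole content is the reverse inclusion $\mathrm{Fix}(\phi) \subseteq F_{n-1}$. Here I would first record the (implicit) hypothesis $w \neq 1$: if $w=1$ then $\phi = \mathrm{id}$ and $\mathrm{Fix}(\phi) = F_n$, so the statement is meaningful and correct precisely when $w$ is nontrivial. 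Note that, as the centraliser of $x_n$ in $F_n$ is $\langle x_n\rangle$ and $w \in F_{n-1}$, the condition $w \neq 1$ is exactly the condition $\phi \neq \mathrm{id}$.

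Next I would take an arbitrary $g \in \mathrm{Fix}(\phi)$ and write it in reduced form with respect to the free product,
\begin{equation*}
g = a_0\, x_n^{e_1}\, a_1\, x_n^{e_2} \cdots x_n^{e_m}\, a_m,
\end{equation*}
where $m \ge 0$, each exponent $e_j \neq 0$, the interior factors $a_1,\dots,a_{m-1}\in F_{n-1}$ are nontrivial, and the end factors $a_0,a_m \in F_{n-1}$ are possibly trivial. The goal is to show $m=0$, which says exactly that $g \in F_{n-1}$.

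Applying $\phi$, and using $\phi(x_n^{e}) = w^{-1}x_n^{e}w$ together with $\phi|_{F_{n-1}}=\mathrm{id}$, I would rewrite
\begin{equation*}
\phi(g) = (a_0 w^{-1})\, x_n^{e_1}\, (w a_1 w^{-1})\, x_n^{e_2} \cdots x_n^{e_m}\, (w a_m).
\end{equation*}
The key observation is that the interior $F_{n-1}$-syllables $w a_j w^{-1}$ (for $1 \le j \le m-1$) are again nontrivial, so no cancellation can merge adjacent $x_n$-syllables; hence this expression is already reduced (up to triviality of the two end factors) and carries exactly the same $x_n$-syllable data $x_n^{e_1},\dots,x_n^{e_m}$ as $g$. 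Assuming $m \ge 1$, I would then compare the two reduced forms of the equal elements $g=\phi(g)$ using uniqueness of normal form: the $F_{n-1}$-prefix standing to the left of the first syllable $x_n^{e_1}$ must agree, i.e. $a_0 = a_0 w^{-1}$, forcing $w=1$. This contradicts $w \neq 1$, so $m=0$ and $g \in F_{n-1}$, which finishes the reverse inclusion.

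The main obstacle, and the only place demanding care, is the bookkeeping of reduced normal forms in the free product, in particular the degenerate cases where an end factor $a_0$, $a_m$, $a_0w^{-1}$ or $wa_m$ is trivial, so that the reduced word begins or ends with an $x_n$-power. I would handle these uniformly by phrasing the comparison in terms of the well-defined group element ``the $F_{n-1}$-part of the word lying to the left of its first $x_n$-syllable,'' which equals $a_0$ for $g$ and $a_0w^{-1}$ for $\phi(g)$ irrespective of triviality; equating these in $F_{n-1}$ still yields $w=1$. A clean alternative that bypasses the syllable edge-cases is to let $F_n$ act on the Bass--Serre tree of the splitting $F_{n-1}*\langle x_n\rangle$ and compare axes and translation lengths, but the direct normal-form argument above is the shortest route.
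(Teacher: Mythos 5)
Your proof is correct and takes essentially the same route as the paper: write $g \in \mathrm{Fix}(\phi)$ in reduced form in the free product $F_n = F_{n-1} * \langle x_n\rangle$, apply $\phi$, and compare normal forms to force $w=1$ (you compare the $F_{n-1}$-prefix before the first $x_n$-syllable, while the paper normalizes the word to end in a nonzero power of $x_n$ and compares at the suffix). Your explicit flagging of the implicit hypothesis $w \neq 1$ (without which the statement fails, since $\phi = \id$) and your handling of the degenerate end-syllable cases are small but genuine improvements on the paper's terser argument.
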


\begin{proof}
Let $g \in Fix(\phi)$. Then we can write $g=w_1 x_n^{\alpha_1} \cdots w_m x_n^{\alpha_m} w_{m+1}$
for some $\alpha_1, \dots, \alpha_m \in \mathbb{Z}$ and $w_1, \dots, w_m, w_{m+1} \in F_{n-1}$.
We can assume that $w_{m+1}=1$ and $\alpha_m \neq 0$. Applying $\phi$ gives
\begin{eqnarray*}
w_1 x_n^{\alpha_1} \cdots w_m x_n^{\alpha_m}  &=& g^\phi\\
& = & w_1 (x_n^{\alpha_1})^w \cdots w_m (x_n^{\alpha_m})^w\\
& = & w_1 w^{-1} x_n^{\alpha_1} (w w_2 w^{-1}) \cdots (w w_m w^{-1}) x_n^{\alpha_m} w.\\
\end{eqnarray*}
Note that the elements $w w_i w^{-1} \neq 1$ and we have equality in the free product
$F_n= F_{n-1} \star \langle x_n\rangle$. This implies that $m=1$ and $\alpha_m=0$. Hence $g \in F_{n-1}$.
\end{proof}

We also need the well-known conjugation rules in $P_4$.
Consider the automorphism of $P_3$ of the following form
$$
\phi : \left\{
\begin{array}{ll}
A_{1,2} \longmapsto A_{1,2}A_{1,3}A_{2,3}^{-1}A_{1,3}^{-1} &  \\
A_{1,3} \longmapsto A_{1,3}A_{2,3} &  \\
A_{2,3} \longmapsto A_{2,3}. &
\end{array} \right.
$$

\begin{lemma}
$Fix (\widehat{A_{1,3}})= \langle A_{1,4}A_{3,4}, A_{1,4}A_{2,4}A_{3,4} \rangle $.
\end{lemma}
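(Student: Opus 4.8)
The plan is to regard $\widehat{A_{1,3}}$ as an automorphism of the free group $U_4=\langle A_{1,4},A_{2,4},A_{3,4}\rangle\cong F_3$; this makes sense because $U_4$ is normal in $P_4$, so conjugation by $A_{1,3}$ preserves it. Write $a=A_{1,4}$, $b=A_{2,4}$, $c=A_{3,4}$. First I would compute the action of $\widehat{A_{1,3}}$ on the basis $\{a,b,c\}$ from the conjugation rules of Section~\ref{sec2}, obtaining
\begin{align*}
\widehat{A_{1,3}}(a) &= (ac)\,a\,(ac)^{-1}, \\
\widehat{A_{1,3}}(b) &= [a^{-1},c^{-1}]\,b\,[a^{-1},c^{-1}]^{-1}, \\
\widehat{A_{1,3}}(c) &= (ac)\,c\,(ac)^{-1}.
\end{align*}
In particular $\widehat{A_{1,3}}$ restricts on $\langle a,c\rangle$ to conjugation by $u:=ac=A_{1,4}A_{3,4}$, so $u$ is fixed.

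Next I would show that the second claimed generator $D:=abc=A_{1,4}A_{2,4}A_{3,4}$ is fixed as well. The cleanest route is via the central twists: since $z_4=z_3\,D$ with $z_4\in Z(P_4)$ and $z_3\in Z(P_3)$, and since $A_{1,3}\in P_3$, conjugation by $A_{1,3}$ fixes both $z_4$ (central in $P_4$) and $z_3$ (central in $P_3$, and $A_{1,3}\in P_3$), hence fixes $D=z_3^{-1}z_4$. Alternatively $\widehat{A_{1,3}}(abc)=abc$ drops out of a short cancellation from the three displayed formulas. Either way this gives the inclusion $\langle A_{1,4}A_{3,4},A_{1,4}A_{2,4}A_{3,4}\rangle\subseteq Fix(\widehat{A_{1,3}})$.

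For the reverse inclusion I would change basis. The set $\{u,D,a\}$ is again a free basis of $U_4$: indeed $c=a^{-1}u$ and $b=a^{-1}Du^{-1}a$, so $\{u,D,a\}$ generates $U_4$, and three generators of a free group of rank $3$ automatically form a basis (the induced surjection $F_3\to U_4$ is an isomorphism since finitely generated free groups are Hopfian). In this new basis $\widehat{A_{1,3}}$ fixes $u$ and $D$ and sends $a\mapsto uau^{-1}=(u^{-1})^{-1}\,a\,(u^{-1})$ with $u^{-1}\in\langle u,D\rangle$. This is exactly the hypothesis of Lemma~\ref{fix set} with $x_1=u$, $x_2=D$, $x_3=a$, and $w=u^{-1}$, which yields $Fix(\widehat{A_{1,3}})=\langle u,D\rangle=\langle A_{1,4}A_{3,4},A_{1,4}A_{2,4}A_{3,4}\rangle$, as claimed.

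The routine but error-prone part is the first step: extracting the three conjugation formulas correctly and simplifying $\widehat{A_{1,3}}(a)$ and $\widehat{A_{1,3}}(c)$ to honest conjugations by $ac$. The genuinely load-bearing observation is the change of basis, namely recognizing that the two fixed elements $ac$ and $abc$ can be completed by the single remaining generator $a$ into a basis in which the automorphism has the special "fix all but one, conjugate the last" shape required by Lemma~\ref{fix set}. Once that normal form is in place the conclusion is immediate; without it one would have to fall back on a general rank bound for fixed subgroups of free-group automorphisms and argue separately that nothing lies beyond the rank-two subgroup already produced.
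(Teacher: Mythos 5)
Your proof is correct and follows essentially the same route as the paper: check that $A_{1,4}A_{3,4}$ and $A_{1,4}A_{2,4}A_{3,4}$ are fixed, rewrite $U_4$ in the generating set $\{A_{1,4}A_{2,4}A_{3,4},\, A_{1,4}A_{3,4},\, A_{1,4}\}$, and apply Lemma~\ref{fix set}. The only difference is cosmetic: the paper verifies that $A_{1,4}A_{2,4}A_{3,4}$ is fixed by direct computation with the conjugation rules, while you offer the slicker observation that it equals $z_3^{-1}z_4$ and hence commutes with $A_{1,3}\in P_3$.
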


\begin{proof}
Consider
\begin{eqnarray*}
(A_{1,4}A_{2,4}A_{3,4})^{A_{1,3}}  &=& (A_{1,4}A_{3,4}) A_{1,4} (A_{1,4}A_{3,4})^{-1} (A_{1,4}A_{3,4}A_{1,4}^{-1}A_{3,4}^{-1})A_{2,4}\\
& & (A_{1,4}A_{3,4}A_{1,4}^{-1}A_{3,4}^{-1})^{-1}A_{1,4}A_{3,4}A_{3,4}(A_{1,4}A_{3,4})^{-1}\\
& = & A_{1,4}A_{2,4}A_{3,4}A_{1,4}A_{3,4}^{-1}A_{1,4}^{-1}A_{1,4}A_{3,4}A_{1,4}^{-1}\\
& = & A_{1,4}A_{2,4}A_{3,4}.
\end{eqnarray*}
Clearly, $(A_{1,4}A_{3,4})^{A_{1,3}}=A_{1,4}A_{3,4}$. Note that 
$$U_4= \langle A_{1,4},A_{2,4}, A_{3,4} \rangle= \langle A_{1,4}A_{2,4}A_{3,4}, A_{1,4}A_{3,4}, A_{1,4}\rangle.$$
 In these generators, we have
 \begin{eqnarray*}
(A_{1,4}A_{2,4}A_{3,4})^{A_{1,3}}&=& A_{1,4}A_{2,4}A_{3,4},\\
(A_{1,4}A_{3,4})^{A_{1,3}}&=& A_{1,4}A_{3,4},\\
A_{1,4}^{A_{1,3}}&=& A_{1,4}A_{3,4}A_{1,4} (A_{1,4}A_{3,4})^{-1}.
\end{eqnarray*}
From Lemma \ref{fix set}, we obtain $Fix (A_{1,3})= \langle A_{1,4}A_{3,4}, A_{1,4}A_{2,4}A_{3,4} \rangle $.
\end{proof}

\begin{lemma}
$Fix(\widehat{A_{1,3}A_{2,3}})=\langle  A_{1,4}A_{2,4}A_{3,4} \rangle$.
\end{lemma}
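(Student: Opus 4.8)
The plan is to compute the fixed subgroup inside $U_4$, exactly as in the two preceding lemmas. Here the inner automorphism $\widehat{A_{1,3}A_{2,3}}$ acts on $U_4$ by $u\mapsto (A_{1,3}A_{2,3})^{-1}u(A_{1,3}A_{2,3})$; since $U_4$ is normal in $P_4$ this is a genuine automorphism $\Phi$ of the free group $U_4\cong F_3$, and the sought fixed set is the centralizer of $A_{1,3}A_{2,3}$ in $U_4$. Writing $\widehat{A_{1,3}A_{2,3}}=\widehat{A_{2,3}}\circ\widehat{A_{1,3}}$ and feeding the conjugation rules recalled in Section~\ref{sec2} through this composite, I would pass to the free basis $g_1=A_{1,4}A_{2,4}A_{3,4}$, $g_2=A_{2,4}A_{3,4}$, $g_3=A_{3,4}$ of $U_4$. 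The virtue of this basis is that the action becomes triangular: a direct computation should give $g_1\mapsto g_1$, $g_3\mapsto g_1 g_3 g_1^{-1}$ and $g_2\mapsto g_1 g_3 g_1^{-1}g_2 g_3^{-1}$. Executing this computation cleanly from the conjugation rules is the first technical point, but it runs exactly parallel to the computation in the preceding lemma.

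Once the action is in this form, the inclusion $\langle A_{1,4}A_{2,4}A_{3,4}\rangle\subseteq \mathrm{Fix}(\Phi)$ is immediate from $\Phi(g_1)=g_1$. For the reverse inclusion I would first isolate the free factor $K=\langle g_1,g_3\rangle$ in the decomposition $U_4=K\ast\langle g_2\rangle$. Since $\Phi(g_1)=g_1$ and $\Phi(g_3)=g_1 g_3 g_1^{-1}$, the subgroup $K$ is $\Phi$-invariant, and $\Phi|_K$ has precisely the shape treated in \lemref{fix set}: it fixes $g_1$ and sends $g_3$ to $w^{-1}g_3 w$ with $w=g_1^{-1}\in\langle g_1\rangle$. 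Hence $\mathrm{Fix}(\Phi|_K)=\langle g_1\rangle$. It then remains only to show that no fixed element genuinely involves $g_2$, that is, $\mathrm{Fix}(\Phi)\subseteq K$.

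For this last step I would argue with the normal form in $U_4=K\ast\langle g_2\rangle$. Put $a=g_1 g_3 g_1^{-1}$ and $b=g_3^{-1}$, so $\Phi(g_2^{\pm1})=(a g_2 b)^{\pm1}$ with $a,b\in K$; observe that $\Phi$ acts trivially on $K^{\mathrm{ab}}\cong\mathbb{Z}^2$, whereas both $a$ and $b^{-1}$ have image $\bar g_3\neq 0$ there. Suppose $u\in\mathrm{Fix}(\Phi)$ has reduced form $u=k_0 g_2^{\varepsilon_1}k_1\cdots g_2^{\varepsilon_m}k_m$ with $m\geq 1$ and $k_i\in K$. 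Expanding $\Phi(u)$ and regrouping the $K$-syllables exhibits $\Phi(u)$ as a word with exactly $m$ occurrences of $g_2^{\pm1}$; since $\Phi(u)=u$ also has $m$ such syllables, no cancellation of $g_2$-syllables can take place, so this expression is already reduced and must agree with $u$ syllable by syllable. Comparing the leftmost $K$-syllable gives $\Phi(k_0)c_1=k_0$, where $c_1\in\{a,b^{-1}\}$; abelianising in $K$ forces $\bar c_1=0$, contradicting $\bar c_1=\bar g_3\neq 0$. Thus $\mathrm{Fix}(\Phi)\subseteq K$, and therefore $\mathrm{Fix}(\widehat{A_{1,3}A_{2,3}})=\langle g_1\rangle=\langle A_{1,4}A_{2,4}A_{3,4}\rangle$. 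I expect the main obstacle to be this normal-form bookkeeping — specifically verifying that the displayed expression for $\Phi(u)$ really retains all $m$ of its $g_2$-syllables after reduction, which is what makes the leftmost-syllable comparison legitimate; once that is in place, abelianising $K$ closes the argument in a single line.
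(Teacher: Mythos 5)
Your proof is correct and follows essentially the same route as the paper's: the paper likewise passes to a free basis of $U_4$ containing $A_{1,4}A_{2,4}A_{3,4}$ and $A_{3,4}$ (taking $A_{1,4}$ rather than your $A_{2,4}A_{3,4}$ as the third generator), shows by the same free-product normal-form bookkeeping that a fixed element involves no syllables in the third generator, and then concludes via Lemma~\ref{fix set} applied to the invariant factor $\langle A_{1,4}A_{2,4}A_{3,4}, A_{3,4}\rangle$, exactly as you do. The technical point you flagged does go through (any cancellation would strictly drop the $g_2$-letter count of $\Phi(u)$ below the count $m$ that $u$ itself realizes), and your computed action is in fact the accurate one: the paper's displayed formula $z^{A_{1,3}A_{2,3}}=(xy)z(xy)^{-1}$ should read $(xyx^{-1})z(xyx^{-1})^{-1}$, a slip that affects neither argument since both conjugators lie in $\langle x,y\rangle$ and have nonzero image in the abelianization.
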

\begin{proof}
First, note that $A_{1,4}A_{2,4}A_{3,4} \in Fix (A_{1,3}A_{2,3})$. Now, we determine action of $A_{1,3}A_{2,3}$ on $U_4$. We have
\begin{eqnarray*}
A_{1,4}^{A_{1,3}A_{2,3}}  &=& \big( (A_{1,4}A_{3,4})A_{1,4}(A_{1,4}A_{3,4})^{-1} \big)^{A_{2,3}}\\
& = & (A_{1,4}A_{2,4}A_{3,4}A_{2,4}^{-1}) A_{1,4} (A_{1,4}A_{2,4}A_{3,4}A_{2,4}^{-1})^{-1}\\
&= & (A_{1,4}A_{2,4}A_{3,4}) A_{2,4}^{-1} A_{1,4} A_{2,4} (A_{1,4}A_{2,4}A_{3,4})^{-1}
\end{eqnarray*}
and
\begin{eqnarray*}
A_{3,4}^{A_{1,3}A_{2,3}}  &=& (A_{1,4}A_{3,4}A_{1,4}^{-1})^{A_{2,3}}\\
& = & (A_{1,4}A_{2,4}A_{3,4}) A_{3,4} (A_{1,4}A_{2,4}A_{3,4})^{-1}.
\end{eqnarray*}
Also, we have $(A_{1,4}A_{2,4}A_{3,4})^{A_{1,3}A_{2,3}}= A_{1,4}A_{2,4}A_{3,4}$. If we set $x=A_{1,4}A_{2,4}A_{3,4}$, $y=A_{3,4}$ and $z=A_{1,4}$, then
\begin{eqnarray*}
x^{A_{1,3}A_{2,3}}&=& x,\\
y^{A_{1,3}A_{2,3}}&=& xy x^{-1},\\
z^{A_{1,3}A_{2,3}}&=&(xyz^{-1})z (xyz^{-1})^{-1}=(xy)z(xy)^{-1}.
\end{eqnarray*}
Let $w \in Fix(A_{1,3}A_{2,3})$. Then $w=w_1 z^{\alpha_1} w_2 z^{\alpha_2} \cdots w_m z^{\alpha_m}w_{m+1}$, where $w_1, w_2, \dots, w_{m+1} \in \langle y, x\rangle $. Now
\begin{eqnarray*}
w &=& w^{A_{1,3}A_{2,3}} \\
&=& (x w_1 x^{-1}) (xy z^{-1}) z^{\alpha_1} (xy z^{-1})^{-1}\cdots (xy z^{-1}) z^{\alpha_m}(xy z^{-1})^{-1} xw_m x^{-1}\\
& = & w_1^x (xy)z^{\alpha_1} (xy)^{-1} \cdots (xy)z^{\alpha_m} (xy)^{-1} w_m^x\\
& = & (w_1^x xy) z^{\alpha_1} \big( (xy)^{-1} w_2^x (xy) \big) \cdots \big( (xy)^{-1} w_{m-1}^x (xy) \big) z^{\alpha_m} \big((xy)^{-1} w_m^x \big).
\end{eqnarray*}
This implies $(xy)^{-1}w_m^x= w_{m+1}$, which is a contradiction if $\alpha_m \neq 0$. Hence $\alpha_1=\cdots =\alpha_m =0$ and $w=x^{\alpha_1} y^{\beta_1} \cdots x^{\alpha_n} y^{\beta_n}$. Now, by Lemma \ref{fix set}, $w=  x^{\alpha}$ for some $\alpha$. This completes the proof of the lemma.
\end{proof}

\textbf{Proof of Theorem \ref{not liftable auto}. }
If we lift the automorphism $\phi$ to $P_4$, then $Fix(\widehat{A_{1,3}}) \cong F_2$
and $Fix(\widehat{A_{1,3}A_{2,3}}) \cong \mathbb{Z}$, which is a contradiction.
Hence $\phi$ cannot be lifted to $P_4$. $\Box$

In the reverse direction, we have the following.

\begin{theorem}\label{not-extendable-auto}
There exists an automorphism of $U_4$ which cannot be extended to an automorphism of $P_4$.
\end{theorem}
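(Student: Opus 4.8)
The plan is to obstruct extension by a centralizer invariant, combining Lemma~\ref{centraliser} with the distinguished element $\delta := A_{1,4}A_{2,4}A_{3,4}\in U_4$. First I would record that $\delta$ is centralized by the entire complementary copy of $P_3=\langle A_{1,2},A_{1,3},A_{2,3}\rangle$. This is immediate from the centrality of the full twist: writing $z_4=z_3\,\delta$ with $z_3=A_{1,2}A_{1,3}A_{2,3}\in Z(P_3)$, for any $g\in P_3$ one gets $z_3\delta g=z_4g=gz_4=gz_3\delta=z_3g\delta$, hence $\delta g=g\delta$. Therefore $\C_{P_4}(\delta)\supseteq\langle\delta\rangle\cdot P_3\cong\langle\delta\rangle\times P_3$, and this group is non-abelian because $P_3\cong F_2\times\Z$ is.

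The reduction is then as follows. Suppose some $\psi\in\Aut(U_4)$ extends to $\Phi\in\Aut(P_4)$, so that $\Phi(U_4)=U_4$ and $\Phi|_{U_4}=\psi$. Since $\delta\in U_4$ we have $\Phi(\delta)=\psi(\delta)$, and Lemma~\ref{centraliser} gives $\C_{P_4}(\psi(\delta))=\Phi\big(\C_{P_4}(\delta)\big)\cong\C_{P_4}(\delta)$. Thus it suffices to produce a single $\psi\in\Aut(U_4)$ for which $\C_{P_4}(\psi(\delta))$ is abelian: this contradicts the non-abelianity just established and shows $\psi$ is not extendable.

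For the construction I would take an automorphism of the free group $U_4$ carrying the symmetric word $\delta$ to a word linking the three strands asymmetrically; a convenient explicit choice is $\psi\colon A_{1,4}\mapsto A_{1,4},\ A_{2,4}\mapsto A_{2,4},\ A_{3,4}\mapsto A_{2,4}^{-2}A_{3,4}$, for which $u:=\psi(\delta)=A_{1,4}A_{2,4}^{-1}A_{3,4}$. To bound $\C_{P_4}(u)$ I would intersect with the normal free subgroup, obtaining $\C_{P_4}(u)\cap U_4=\C_{U_4}(u)=\langle u\rangle\cong\Z$, since $u$ is cyclically reduced of length three and hence not a proper power in $U_4$. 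It then remains to control the image of $\C_{P_4}(u)$ in $P_4/U_4\cong P_3$: an element $\bar g\in P_3$ lies in this image precisely when the outer action of $\bar g$ fixes the $U_4$-conjugacy class of $u$. If no non-trivial $\bar g$ does so, the image is trivial, whence $\C_{P_4}(u)=\C_{U_4}(u)\cong\Z$ is abelian, completing the argument.

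The hard part will be exactly this last point: verifying that, in contrast to $\delta$ (which every $\bar g\in P_3$ fixes on the nose), the asymmetric word $u$ has trivial stabilizer up to $U_4$-conjugacy. I would carry this out using the explicit conjugation formulas of Section~\ref{sec2} for the action of $A_{1,2},A_{1,3},A_{2,3}$ on $U_4=\langle A_{1,4},A_{2,4},A_{3,4}\rangle$, reducing to a finite check that $g^{-1}ug$ being $U_4$-conjugate to $u$ forces $\bar g=1$; the abelianized action on $U_4^{\mathrm{ab}}\cong\Z^3$, on which every such conjugation acts trivially, already cuts the possibilities down sharply. A more structural alternative, avoiding the ad hoc word, is to observe that extension forces the outer class of $\psi$ to normalize the image of the outer action $P_3\to\operatorname{Out}(U_4)$, and then to choose $\psi$ whose outer class fails to normalize this basis-conjugating subgroup.
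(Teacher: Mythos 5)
Your reduction in the first two paragraphs is sound, and it is the same germ of an idea as the paper's proof (both exploit the relation $z_4=z_3\delta$ with $z_3=A_{1,2}A_{1,3}A_{2,3}$, $\delta=A_{1,4}A_{2,4}A_{3,4}$). But the execution has a fatal flaw. The target you set yourself --- finding $\psi$ with $\C_{P_4}(\psi(\delta))\cong\Z$, i.e.\ with trivial image of the centralizer in $P_4/U_4\cong P_3$ --- is unattainable for \emph{every} non-trivial $u\in U_4$: the central element $z_4\in Z(P_4)$ centralizes $u$, lies outside $U_4$, and maps to $\bar z_3\neq 1$ in $P_3$, so the image of $\C_{P_4}(u)$ in $P_3$ always contains $Z(P_3)$ and $\C_{P_4}(u)\supseteq\langle u,z_4\rangle\cong\Z^2$. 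Equivalently: your own first paragraph shows that $z_3$ acts on $U_4$ as conjugation by $\delta^{-1}$, i.e.\ the outer action of $\bar z_3$ is trivial, so $\bar z_3$ stabilizes \emph{every} $U_4$-conjugacy class; the "trivial stabilizer up to $U_4$-conjugacy" you hope to verify never occurs. (The strategy could be salvaged by aiming instead at "stabilizer exactly $Z(P_3)$", which would give $\C_{P_4}(u)=\langle u,z_4\rangle\cong\Z^2$, still abelian and still incompatible with the non-abelian $\C_{P_4}(\delta)$.)

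Worse, your concrete candidate fails even this salvaged version. Writing $x_i=A_{i,4}$ and using the paper's conjugation rules, $A_{1,3}$ acts on $U_4$ by $x_1\mapsto (x_1x_3)x_1(x_1x_3)^{-1}$, $x_2\mapsto c\,x_2\,c^{-1}$ with $c=[x_1^{-1},x_3^{-1}]=x_1x_3x_1^{-1}x_3^{-1}$, and $x_3\mapsto (x_1x_3)x_3(x_1x_3)^{-1}$. Since $(x_1x_3)^{-1}c=(x_3x_1)^{-1}$ and $c^{-1}(x_1x_3)=x_3x_1$, one gets $u^{A_{1,3}}=(x_1x_3)x_1(x_3x_1)^{-1}\cdot x_2^{-1}\cdot (x_3x_1)x_3(x_1x_3)^{-1}=x_1x_2^{-1}x_3=u$ for your $u=A_{1,4}A_{2,4}^{-1}A_{3,4}$. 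So $A_{1,3}\in\C_{P_4}(u)$: the stabilizer contains a non-central element of $P_3$, and abelianness of $\C_{P_4}(u)$ is not established; proving it would require determining the full stabilizer, which is exactly the "hard part" you defer. Your abelianization remark also points the wrong way: the $P_3$-action sends each $A_{i,4}$ to a conjugate of itself, hence is trivial on $U_4^{\mathrm{ab}}$ and cuts nothing down. Contrast this with the paper's proof of Theorem~\ref{not-extendable-auto}, which needs no stabilizer computation at all: it takes $\phi\colon A_{1,4}\mapsto A_{1,4}A_{2,4}$ (fixing $A_{2,4},A_{3,4}$), observes that any extension induces an automorphism of $P_3\cong P_4/U_4$ preserving $Z(P_3)$, so $z_3\mapsto z_3^{\pm1}$ modulo $U_4$, and then transports the single relation "$z_3$ centralizes $\delta$" through $\phi$ to force $\delta^{\mp1}$ to commute with $\phi(\delta)=A_{1,4}A_{2,4}^2A_{3,4}$ inside the free group $U_4$, which is absurd. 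Your plan, to be completed, needs input of the same kind but strictly more of it.
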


\begin{proof}
Consider the automorphism of $U_4$ of the following form
$$
\phi : \left\{
\begin{array}{ll}
A_{1,4} \longmapsto A_{1,4}A_{2,4} &  \\
A_{2,4} \longmapsto A_{2,4} &  \\
A_{3,4} \longmapsto A_{3,4}. &  \\
\end{array} \right.
$$
Suppose that we can extend $\phi$ to an automorphism of $P_4$.
Then $\phi$ induces an automorphism of $P_3$.
Recall that $Z(P_3)= \langle A_{1,2}A_{1,3}A_{2,3}\rangle$.
Then $(A_{1,2}A_{1,3}A_{2,3})^\phi =  (A_{1,2}A_{1,3}A_{2,3})^{\pm 1}$.
Also, $A_{1,2}A_{1,3}A_{2,3}$ acts on $U_4$ by conjugating $A_{1,4}A_{2,4}A_{3,4}$.
Hence $(A_{1,4}A_{2,4}A_{3,4})^{A_{1,2}A_{1,3}A_{2,3}}= A_{1,4}A_{2,4}A_{3,4}$.
Applying $\phi$, we get
$$
\big((A_{1,4}A_{2,4}A_{3,4})^{\phi} \big)^{(A_{1,2}A_{1,3}A_{2,3})^\phi}= (A_{1,4}A_{2,4}A_{3,4})^\phi.
$$
This further implies
$(A_{1,4}A_{2,4}^2A_{3,4})^{(A_{1,4}A_{2,4}A_{3,4})^{\pm 1}}= A_{1,4}A_{2,4}^2A_{3,4}$,
which is a contradiction. Hence $\phi$ cannot be extended to an automorphism of $P_4$.
\end{proof}

\begin{question}
What can we say about the lifting and extension problem for the exact sequence
$$
1 \to U_n \to P_n \to P_{n-1} \to 1
$$
for $n \ge 5$.
\end{question}

\bigskip

\section{Extension and lifting problem for $1 \to P_n \to B_n \to S_n \to 1$}\label{sec6}

In this section, we investigate the extension and lifting problem for the well-known short exact sequence
$$1 \to P_n \to B_n \stackrel{\pi}{\to} S_n \to 1.$$

The following is a straightforward observation.

\begin{lemma}\label{extend-inner}
Let $1 \to K \to G \to H \to 1$ be a short exact sequence of groups. Then every $\phi \in \Inn(H)$
has a lift in $ \Aut(G)$ and every $\psi \in \Inn(K)$ has an extension in $\Aut(G)$.
\end{lemma}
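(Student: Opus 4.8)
The plan is to observe that the inner automorphisms of the total group $G$ are themselves the natural source of both the required lift and the required extension. Indeed, conjugation by any element of $G$ preserves the normal subgroup $K$, and hence such an inner automorphism both descends to the quotient $H \cong G/K$ and restricts to $K$, in each case again as conjugation. Throughout I identify $K$ with its image in $G$ and $H$ with $G/K$, and I use the paper's conventions $y^x = x^{-1}yx$ and $\hat{z}$ for the inner automorphism induced by $z$.

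For the lifting statement, I would begin with $\phi \in \Inn(H)$ and write $\phi = \hat{h}$ for some $h \in H$. Using surjectivity of the map $G \to H$, choose a preimage $g \in G$ mapping to $h$, and consider $\hat{g} \in \Inn(G) \le \Aut(G)$. Since $K$ is normal in $G$, we have $\hat{g}(K) = K$, so $\hat{g}$ induces an automorphism of $G/K$. The verification then amounts to checking that this induced map sends the coset of $x$ to the coset of $x^{g}$, which in $H$ is exactly $\bar{x}^{\,h}$; thus the induced automorphism is $\hat{h} = \phi$, and $\hat{g}$ is the desired lift.

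For the extension statement, I would take $\psi \in \Inn(K)$, write $\psi = \hat{k}$ for conjugation by some $k \in K$, and view the same element as $\hat{k} \in \Inn(G) \le \Aut(G)$. Normality of $K$ again gives $\hat{k}(K) = K$, so the restriction $\hat{k}|_{K}$ is well defined; and for $x \in K$ one computes $\hat{k}(x) = x^{k} = \psi(x)$. Hence $\hat{k}$ restricts to $\psi$ on $K$, giving the required extension.

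There is essentially no obstacle in this argument: the only facts to confirm are the two bookkeeping identities $\overline{x^{g}} = \bar{x}^{\,\pi(g)}$ in $H$ and $\hat{k}(x) = x^{k}$ in $K$, both immediate from the definition of conjugation together with the normality of $K$ in $G$. The content of the lemma is precisely the remark that an inner automorphism of the total group always provides compatible inner automorphisms on both the subgroup and the quotient, so that inner automorphisms of $H$ and of $K$ are never the source of any lifting or extension obstruction.
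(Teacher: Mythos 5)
Your proof is correct, and it is precisely the ``straightforward observation'' the paper has in mind: the authors state Lemma~\ref{extend-inner} without proof, and the intended argument is exactly yours, namely that conjugation by a preimage $g\in G$ of $h\in H$ lifts $\hat{h}$, while conjugation in $G$ by $k\in K$ restricts on $K$ to $\hat{k}$. Both verifications you flag (compatibility with the quotient map and with the restriction) are the right ones and are immediate from normality of $K$.
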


We now prove the following.

\begin{prop}\label{lift-inner-sn}
If $n \neq 6$, then any automorphism $\phi \in \Aut(S_n)$ can be lifted to an
automorphism of $B_n$. Further, the non-inner automorphism of $S_6$ cannot be
lifted to an automorphism of $B_6$.
\end{prop}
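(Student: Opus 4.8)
The plan is to recall the well-known structure of $\Aut(S_n)$ and then treat the two halves of the statement separately. For the first half, it is classical (a theorem of H\"older) that $\Aut(S_n)=\Inn(S_n)$ for all $n\neq 6$ (and $n\geq 3$), so every $\phi\in\Aut(S_n)$ is inner. Then the conclusion follows immediately from \lemref{extend-inner}: any inner automorphism of the quotient $H=S_n$ in the exact sequence $1\to P_n\to B_n\to S_n\to 1$ lifts to $\Aut(B_n)$. Concretely, if $\phi=\widehat{\pi(b)}$ for some $b\in B_n$, then the inner automorphism $\hat{b}$ of $B_n$ descends to $\phi$ on $S_n$. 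This disposes of the case $n\neq 6$ with essentially no computation.

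The substantive content is the second half, concerning the \emph{exceptional} outer automorphism of $S_6$. The strategy I would use is to exploit the fact, coming from \cite{Artin2} (Artin's classification of transitive permutation representations of $B_n$ on $n$ letters), that the surjection $\pi\colon B_n\to S_n$ is essentially the only transitive homomorphism up to the natural symmetries. Suppose, for contradiction, that the outer automorphism $\phi$ of $S_6$ lifts to some $\Phi\in\Aut(B_6)$, meaning the square commutes: $\pi\circ\Phi=\phi\circ\pi$. Since $P_6$ is characteristic in $B_6$, $\Phi$ restricts to an automorphism of $P_6$ and therefore must send $\ker\pi=P_6$ to itself, which is consistent; the obstruction must instead come from the interaction of $\Phi$ with the cycle type of the images of the standard generators.

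The key step is to track cycle types. The generators $\sigma_i$ map under $\pi$ to the transpositions $\pi(\sigma_i)=(i,\,i+1)$. Now $\Aut(B_6)=\langle\Inn(B_6),\tau\rangle$ by Dyer--Grossman, and both inner automorphisms and $\tau$ send $\sigma_i$ to a \emph{conjugate} of $\sigma_i^{\pm1}$; hence any $\Phi\in\Aut(B_6)$ sends each $\sigma_i$ to a conjugate of some $\sigma_j^{\pm1}$, so $\pi(\Phi(\sigma_i))$ is again a transposition in $S_6$. Therefore the induced automorphism $\bar\Phi$ of $S_6$ must carry transpositions to transpositions. But the defining property of the \emph{outer} automorphism of $S_6$ is precisely that it does \emph{not} preserve the conjugacy class of transpositions: it interchanges the class of transpositions (cycle type $2\,1^4$) with the class of triple transpositions (cycle type $2^3$). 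This is the contradiction, and it is the heart of the argument.

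The main obstacle I anticipate is making rigorous the claim that \emph{every} automorphism of $B_6$ sends each $\sigma_i$ to a conjugate of $\sigma_j^{\pm1}$ for some $j$, and hence that $\bar\Phi$ preserves transpositions. This requires invoking the precise Dyer--Grossman description $\Aut(B_n)=\Inn(B_n)\rtimes\langle\tau\rangle$ with $\tau(\sigma_i)=\sigma_i^{-1}$, and checking that $\pi(w^{-1}\sigma_j^{\pm1}w)$ is a transposition for any $w\in B_6$ (immediate, since $\pi$ is a homomorphism and the image of a conjugate of a transposition is a transposition). Once this is in place, the incompatibility with the outer automorphism's action on conjugacy classes is immediate, and no further computation is needed.
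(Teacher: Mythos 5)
Your proof is correct, and its skeleton coincides with the paper's: both halves rest on H\"older's theorem plus \lemref{extend-inner} for $n\neq 6$, and, for $n=6$, on the Dyer--Grossman decomposition $\Aut(B_6)=\Inn(B_6)\rtimes\langle\tau\rangle$ together with the observation that $\pi$ kills the exponent $\pm 1$ on $\sigma_i$ (since transpositions are involutions). The difference is in the finishing move. You extract only the cycle type: the induced automorphism sends transpositions to transpositions, and you then invoke the classical fact that the outer automorphism of $S_6$ interchanges the class $2\cdot 1^4$ with the class $2^3$. The paper extracts more from the same computation: writing the lift as $\hat{g}$ or $\hat{g}\tau$, it finds $s_i^{\phi}=(g^{\pi})^{-1}s_i\,g^{\pi}$ (respectively $(\tau(g)^{\pi})^{-1}s_i\,\tau(g)^{\pi}$) for \emph{all} $i$, so $\phi$ is literally conjugation by a single element of $S_6$, i.e.\ inner --- contradicting non-innerness with no input about how the outer automorphism acts on conjugacy classes. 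Your route buys a slightly more conceptual statement (any automorphism of $S_6$ induced from $\Aut(B_6)$ preserves the transposition class) at the cost of one extra classical ingredient; the paper's is more self-contained. One small loose end: your parenthetical restriction to $n\geq 3$ leaves $n=2$ outside the H\"older claim and you never return to it; the paper treats it separately, though it is trivial since $\Aut(S_2)=1$ and the identity lifts.
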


\begin{proof}
We know that if $n \neq 2$ or 6, then $\Aut(S_n)= \Inn(S_n)$,
and hence the result follows from Lemma \ref{extend-inner}. For $n=2$,
we have $B_2= \langle \sigma_1\rangle$, $P_2= \langle \sigma_1^2\rangle$  and
$S_2=\mathbb{Z}_2$. Note that  $\Aut(S_2)=1$, and the identity automorphism is obviously liftable to $B_2$.
On the other hand, $\Aut(P_2)= \mathbb{Z}_2$, say, generated  by $\psi$.
Then $\psi(\sigma_1^2)=\sigma_1^{-2}$.
Define $\psi^{\circ}: B_2 \to B_2$ by $\psi^{\circ}(\sigma_1)=\sigma_1^{-1}$.
Then $\psi^{\circ}$ is an extension of $\psi$.

Finally, let $n=6$ and $\phi \in \Aut(S_6)\setminus \Inn(S_6)$.
Since $\Aut (S_6)=\Inn(S_6)\rtimes \mathbb{Z}_2$, we have $\phi^2=1$.
If $\tilde{\phi}$ is a lift of $\phi$, then $\tilde{\phi}=\hat{g}$
for some $g \in B_6$ or $\tilde{\phi}=\hat{g} \tau$, where
$\tau(\sigma_i)=\sigma_i^{-1}$. Here $\hat{g}$ is the inner automorphism of $B_6$ induced by $g$. If
$\tilde{\phi}=\hat{g}$, then applying on the generators yield
$$s_i^\phi=(\sigma_i^\pi)^\phi= \big(\sigma_i^{\tilde{\phi}}\big)^\pi=(g^\pi)^{-1} s_i g^\pi$$
for all $1 \leq i \leq n-1$. This implies that $\phi=\widehat{g^\pi}$, a contradiction.

Now suppose that $\tilde{\phi}=\hat{g} \tau$.
Then
$$s_i^\phi=(\sigma_i^\pi)^\phi  =  \big(\sigma_i^{\tilde{\phi}}\big)^\pi=\big(\tau(g)^{-1}\sigma_i^{-1} \tau(g) \big)^\pi=      (\tau(g)^\pi)^{-1}s_i \tau(g)^\pi$$ for all $1 \leq i \leq n-1$.
This implies that $\phi=\widehat{\tau(g)^\pi}$, which is again a contradiction.
\end{proof}

Recall that $P_n \cong Z(P_n) \times P_n/Z(P_n)$, where $Z(P_n)=\langle z_n \rangle $ is infinite cyclic. Further, the group of central automorphisms of $P_n$ is given by
$$\Aut_c(P_n) \cong tv^\circ(P_n) \rtimes \mathbb{Z}_2,$$
 where $tv^\circ(P_n) = \mathbb{Z}^N$ with $N= \binom{n}{2}-1$ and $\mathbb{Z}_2=\langle \theta_0\rangle $.
Here  $z_n^{\theta_0}=z_n^{-1}$ and it acts trivially on $P_n/Z(P_n)$.

\begin{prop}\label{autcent-circ}
No non-trivial element of $tv^\circ(P_n)$ can be extended to an automorphism of $B_n$.
\end{prop}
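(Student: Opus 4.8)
The plan is to detect the obstruction on the abelianization $V = P_n/P_n' \cong \Z^{\binom n2}$, with basis given by the images $\bar A_{i,j}$ of the generators; the cases $n\le 2$ are vacuous since $tv^\circ(P_n)$ is then trivial, so assume $n\ge 3$. Suppose for contradiction that some nontrivial $\psi\in tv^\circ(P_n)$ extends to $\Psi\in\Aut(B_n)$. Since $P_n$ is characteristic in $B_n$, the automorphism $\Psi$ restricts to $\psi$, and because $\Aut(B_n)=\langle\Inn(B_n),\tau\rangle$ we have $\Psi=\hat g$ or $\Psi=\hat g\tau$ for some $g\in B_n$. I would then compare the three induced $\Z$-linear maps on $V$ and derive a contradiction.

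First I would record the actions on $V$. Writing $t=(t_{ij})$ with $\sum t_{ij}=0$ for the data of $\psi\colon A_{i,j}\mapsto A_{i,j}z_n^{t_{ij}}$, and using $z_n\equiv\sum_{k<l}\bar A_{k,l}$ in $V$, the induced map of $\psi$ is $M=I+\mathbf 1\,t^{\top}$, i.e.\ the matrix whose $(q,p)$ entry is $\delta_{qp}+t_p$. Conjugation by a pure braid is trivial on $V$, so the map induced by $\hat g$ factors through $S_n$; reducing the conjugation rules for the $\sigma_k^{\pm1}$ modulo $P_n'$, where the bracketed corrections and conjugating factors all vanish, shows that $\hat g$ induces the honest permutation matrix $P_{\pi(g)}$ coming from the action of $\pi(g)\in S_n$ on unordered pairs $\{i,j\}$, with coefficient $+1$ and no signs. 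Finally, the lemma of Section~\ref{sec3} computing $t=\tau|_{P_n}$ gives $\bar A_{i,j}\mapsto-\bar A_{i,j}$, so $\tau$ induces $-I$. Hence the image of $\Aut(B_n)$ in $\GL(V)$ consists of the permutation matrices $P_{\pi(g)}$ and their negatives $-P_{\pi(g)}$.

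It then remains to show that $M$ is neither a permutation matrix nor the negative of one when $t\neq 0$. The negative case dies by a single invariant: the total entry sum of $M$ equals $\binom n2+\binom n2\sum_p t_p=\binom n2$ by $\sum t_p=0$, whereas $-P$ has entry sum $-\binom n2$, and $\binom n2\ge 1$. For the permutation case I would argue one column at a time: in column $p$ the $\binom n2-1$ off-diagonal entries all equal $t_p$, while a permutation matrix has at most one nonzero entry per column; since $\binom n2-1\ge 2$ for $n\ge3$, this forces $t_p=0$ for every $p$, contradicting $\psi\neq\id$. Either way the existence of $\Psi$ is contradicted, proving the proposition.

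I expect no genuinely hard step, only one care-point: confirming that $\hat g$ acts by an \emph{unsigned} permutation of the basis of $V$. Concretely one must check that each conjugation rule for $\sigma_k^{\pm1}$ reduces mod $P_n'$ to $\bar A_{i,j}\mapsto\bar A_{k,l}$ with coefficient exactly $+1$ — the commutator brackets and the conjugating prefactors all become trivial in the abelianization — and that inner automorphisms by elements of $P_n$ act trivially, so that the induced map depends only on $\pi(g)$. Once this is verified, the two matrix comparisons are immediate and close the argument.
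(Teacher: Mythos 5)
Your proof is correct, but it takes a genuinely different route from the paper's. The paper's argument is soft and essentially computation-free: since $Z(B_n)\le P_n$, the quotient $\Aut(B_n)/\Inn(P_n)\cong\langle S_n,\tau\rangle$ is finite, so any extension $\phi\in\Aut(B_n)$ of $\phi^\circ\in tv^\circ(P_n)$ has a power $\phi^m\in\Inn(P_n)$; restricting to $P_n$ gives $(\phi^\circ)^m\in tv^\circ(P_n)\cap\Inn(P_n)=\{1\}$ (a central automorphism of $P_n$ that is inner must come from $Z(P_n)$, as $\overline{P}_n$ is centerless), and torsion-freeness of $tv^\circ(P_n)\cong\mathbb{Z}^N$ forces $\phi^\circ=1$. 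You instead construct an explicit linear obstruction on $V=P_n/P_n'$: the image of $\Aut(B_n)$ in $\GL(V)$ lies in $\{\pm P_\pi:\pi\in S_n\}$ — your flagged care-point does check out, since every conjugation rule for $\sigma_k^{\pm 1}$ listed in Section~\ref{sec2} reduces mod $P_n'$ to $\bar A_{i,j}\mapsto \bar A_{k,l}$ with coefficient $+1$, and the paper's formula for $t=\tau|_{P_n}$ gives $-I$ — whereas a nontrivial $\psi\in tv^\circ(P_n)$ induces $I+\mathbf{1}\,t^{\top}$, which your entry-sum and column-count comparisons correctly rule out as $\pm P_\pi$ (using $\binom{n}{2}-1\ge 2$ for $n\ge 3$). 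What the paper's route buys is brevity and independence from the generator formulas; what yours buys is that it needs neither the centerlessness of $\overline{P}_n$ nor the triviality of $tv^\circ(P_n)\cap\Inn(P_n)$, it is the same abelianization technique the paper itself deploys for Lemma~\ref{wn not} and Proposition~\ref{theta0}, and — a genuine bonus — it proves Theorem~\ref{main-result-sn} in one stroke: for a general element of $\Aut_c(P_n)$ one has $\sum t_{ij}\in\{0,-2\}$, and the same column count shows $I+\mathbf{1}\,t^{\top}=-P_\pi$ is also impossible (it forces every $t_{ij}=0$, contradicting $\sum t_{ij}=-2$), so your single matrix comparison subsumes Propositions~\ref{autcent-circ} and \ref{theta0} and Theorem~\ref{main-result-sn}, which the paper assembles from three separate arguments.
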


\begin{proof}
First observe that
$$\Aut(B_n)/\Inn(P_n) \cong \langle \Inn(B_n), \tau \rangle/\Inn(P_n)\cong \langle S_n, \tau\rangle$$
is a finite group. Let $\phi^\circ \in tv^\circ(P_n)$ be extendable to an automorphism $\phi$ of $B_n$.
Then there exists an integer $m$ such that $\phi^m \in \Inn(P_n)$,
and hence $\phi^m|_{P_n}={\phi^\circ}^{m}|_{P_n}$.
This implies $\phi^m \in tv^\circ(P_n)\cap \Inn(P_n)= \{1\}$,
and hence ${\phi^\circ}^{m}=1$. But the group $tv^\circ(P_n)$ is free abelian,
and hence does not have any non-trivial element of finite order. Therefore $\phi^\circ=1$.
\end{proof}

\begin{lemma}
Let $\tau \in \Aut(B_n)$ be the automorphism given by $\tau(\sigma_i)=\sigma_i^{-1}$
for  $1 \leq i \leq n-1$. Then $z_n^\tau=z_n^{-1}$.
\end{lemma}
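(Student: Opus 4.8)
The plan is to reduce to the restriction $t=\tau|_{P_n}$, whose action on the generators $A_{i,j}$ was computed in the first lemma of Section~\ref{sec3}, and then to evaluate $t$ on the explicit word defining $z_n$. Since $z_n\in P_n$, we have $z_n^\tau=t(z_n)$, so it suffices to compute $t(z_n)$. Writing $z_n=C_2C_3\cdots C_n$ with the ``columns'' $C_j=A_{1,j}A_{2,j}\cdots A_{j-1,j}$, and using that $t$ is an automorphism, the computation factors as $t(z_n)=t(C_2)t(C_3)\cdots t(C_n)$.

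First I would show that $t(C_j)=C_j^{-1}$ for each $j$. Set $D_i=A_{i,j}A_{i+1,j}\cdots A_{j-1,j}$, so that $D_1=C_j$, $D_j=1$, and $A_{i,j}=D_iD_{i+1}^{-1}$. The formula for $t$ gives $t(A_{i,j})=D_i^{-1}A_{i,j}^{-1}D_i=D_i^{-1}(D_{i+1}D_i^{-1})D_i=D_i^{-1}D_{i+1}$. Multiplying these over $i=1,\dots,j-1$ makes the product telescope:
\[
t(C_j)=\prod_{i=1}^{j-1}D_i^{-1}D_{i+1}=D_1^{-1}D_j=C_j^{-1}.
\]
Hence $t(z_n)=C_2^{-1}C_3^{-1}\cdots C_n^{-1}$.

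It remains to identify $C_2^{-1}C_3^{-1}\cdots C_n^{-1}$ with $z_n^{-1}=(C_2C_3\cdots C_n)^{-1}$, and this is where the only genuine point lies: the two agree precisely when the $C_j$ pairwise commute. I would deduce this from the nested centers. For $2\le k\le n$ write $z_k=A_{1,2}(A_{1,3}A_{2,3})\cdots(A_{1,k}\cdots A_{k-1,k})=C_2\cdots C_k$, the central generator of the pure braid subgroup $P_k=\langle A_{i,j}:1\le i<j\le k\rangle\le P_n$. For $k\le l$ we have $z_k\in P_k\le P_l$, while $z_l\in Z(P_l)$ commutes with all of $P_l$; hence the elements $z_2,\dots,z_n$ pairwise commute and generate an abelian subgroup $Z$. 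Since $C_j=z_{j-1}^{-1}z_j$ (with the convention $z_1=1$) lies in $Z$, the columns $C_j$ pairwise commute, so $C_2^{-1}\cdots C_n^{-1}=(C_2\cdots C_n)^{-1}=z_n^{-1}$, giving $z_n^\tau=t(z_n)=z_n^{-1}$.

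The main obstacle is purely the bookkeeping in the telescoping step together with the commutation of the columns $C_j$; once the nested-center observation is in place the rest is formal. As an alternative, one could instead use $z_n=(\sigma_1\cdots\sigma_{n-1})^n$ and the word-reversing anti-automorphism of $B_n$, which fixes the Garside element $\Delta$ and hence fixes $z_n=\Delta^2$; then $\tau$ factors as the composition of inversion $g\mapsto g^{-1}$ with word reversal, and $z_n^\tau=z_n^{-1}$ follows immediately. I would present the direct computation as the primary argument, since it stays within the $A_{i,j}$-calculus already set up in the paper.
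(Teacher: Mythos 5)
Your proof is correct, but it takes a genuinely different route from the paper's. The paper never enters the $A_{i,j}$-calculus: it writes $z_n=(\sigma_1\cdots\sigma_{n-1})^n$, observes that $z_n^\tau=z_n^{\pm 1}$ because $Z(B_n)=\langle z_n\rangle$ is preserved by any automorphism, and then kills the sign $+1$ in the abelianization $B_n/B_n'\cong\mathbb{Z}$: modulo $B_n'$ one has $z_n\equiv\sigma_1^{n(n-1)}$ while $z_n^\tau\equiv\sigma_1^{-n(n-1)}$, so $z_n^\tau=z_n$ would force $\sigma_1^{2n(n-1)}\equiv 1 \mod B_n'$, contradicting torsion-freeness. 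That is a three-line sign determination. You instead compute $t(z_n)$ exactly inside $P_n$ using the formula for $t=\tau|_{P_n}$ from the first lemma of Section~\ref{sec3}, and both of your key steps check out: the telescoping is right, since $t(A_{i,j})=D_i^{-1}(D_{i+1}D_i^{-1})D_i=D_i^{-1}D_{i+1}$ gives $t(C_j)=D_1^{-1}D_j=C_j^{-1}$, and the commutation of the columns is sound, because for $k\le l$ one has $z_k\in P_k\le P_l$ and $z_l\in Z(P_l)$, so $[z_k,z_l]=1$ and each $C_j=z_{j-1}^{-1}z_j$ lies in the abelian subgroup $\langle z_2,\dots,z_n\rangle$. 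The trade-off is clear: the paper's argument is shorter, self-contained at the level of $B_n$, and needs no prior formula; yours never needs to know a priori that $z_n^\tau\in\{z_n,z_n^{-1}\}$, stays entirely within the pure braid group, and yields the finer column-by-column identity $t(C_j)=C_j^{-1}$, at the cost of invoking the Section~\ref{sec3} lemma and the standard identification of $\langle A_{i,j}\colon 1\le i<j\le k\rangle\le P_n$ with $P_k$ together with its full-twist center.
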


\begin{proof}
Recall that $z_n=(\sigma_1\cdots \sigma_{n-1})^n$. Thus $z_n^\tau=(\sigma_1^{-1}\cdots \sigma_{n-1}^{-1})^n$.
Since $Z(B_n)= \langle z_n \rangle \cong \mathbb{Z}$, we have $z_n^\tau= z_n^{\pm1}$.
Note that $B_n/B_n'$ is infinite cyclic generated by $\sigma_1 B_n'$.
If  $z_n^\tau= z_n$, then reading this equation modulo $B_n'$ gives
$\sigma_1^{(n-1)n} \equiv \sigma_1^{-(n-1)n} \mod B_n'$.
Equivalently, $\sigma_1^{2(n-1)n} \equiv 1 \mod B_n'$,
which is a contradiction as  $B_n/B_n'$ has no element of finite order. Hence $z_n^\tau=z_n^{-1}$.
\end{proof}

\begin{prop}\label{theta0}
$\theta_0$ cannot be extended to an automorphism of $B_n$.
\end{prop}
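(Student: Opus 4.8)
The plan is to combine the Dyer--Grossman structure theorem for $\Aut(B_n)$ with abelianization. Recall that $\theta_0$ is the central automorphism $\psi$, which fixes every generator $A_{i,j}$ with $\{i,j\}\neq\{1,2\}$ and sends $z_n\mapsto z_n^{-1}$. Suppose, for contradiction, that some $\phi\in\Aut(B_n)$ satisfies $\phi|_{P_n}=\theta_0$. By \cite{Dyer-Gross} we have $\Aut(B_n)=\Inn(B_n)\rtimes\langle\tau\rangle$, so either $\phi=\hat{g}$ or $\phi=\hat{g}\tau$ for some $g\in B_n$.

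First I would eliminate the inner case. Since $z_n$ generates $Z(B_n)$, every inner automorphism of $B_n$ fixes $z_n$; thus if $\phi=\hat{g}$ then $\phi(z_n)=z_n$. But $z_n\in P_n$ and $\theta_0(z_n)=z_n^{-1}$, and $z_n$ has infinite order, so $\phi(z_n)\neq\theta_0(z_n)$, a contradiction.

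It remains to treat $\phi=\hat{g}\tau$, and here I would pass to the abelianization $P_n/P_n'$, which is free abelian with basis the classes $\bar{A}_{i,j}$. Two facts are needed. First, abelianizing the conjugation formulas for $\sigma_k^{\pm1}A_{i,j}\sigma_k^{\mp1}$ shows that the $B_n$-conjugation action on $P_n/P_n'$ factors through $S_n$ and merely \emph{permutes} the basis elements $\bar{A}_{i,j}$ according to the action on unordered pairs $\{i,j\}$; in particular it introduces no sign. Second, by the explicit formula for $t=\tau|_{P_n}$ established above, $t(A_{i,j})=(A_{i,j}\cdots A_{j-1,j})^{-1}A_{i,j}^{-1}(A_{i,j}\cdots A_{j-1,j})\equiv A_{i,j}^{-1}\bmod P_n'$. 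Now fix a pair $\{i,j\}\neq\{1,2\}$ (available for $n\ge 3$). On one hand $\phi(A_{i,j})=\theta_0(A_{i,j})=A_{i,j}$, so $\overline{\phi(A_{i,j})}=\bar{A}_{i,j}$; on the other hand $\phi(A_{i,j})=g^{-1}t(A_{i,j})g$, whose class is $-\bar{A}_{\pi\{i,j\}}$, where $\pi\in S_n$ is the image of $g$. Equating gives $\bar{A}_{i,j}+\bar{A}_{\pi\{i,j\}}=0$ in a free abelian group, which is impossible. This contradiction completes the argument.

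I expect the only delicate point to be the first fact in the last paragraph: that conjugation by an arbitrary $g\in B_n$ acts on $P_n/P_n'$ through $S_n$ \emph{without sign}, so that it cannot offset the inversion produced by $\tau$. This is precisely where the commutator corrections in the conjugation rules must be checked to vanish modulo $P_n'$. Once that is secured, the contrast between $\theta_0$ (which fixes the relevant generators) and $\tau$ (which inverts them) produces the sign obstruction directly.
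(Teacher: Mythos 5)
Your proof is correct and follows essentially the same route as the paper: both use the Dyer--Grossman decomposition $\Aut(B_n)=\Inn(B_n)\rtimes\langle\tau\rangle$, rule out the inner case via $z_n\mapsto z_n^{-1}$, and then obtain the sign contradiction in $P_n/P_n'$ by contrasting the permutation action of inner automorphisms (the commutator corrections vanishing modulo $P_n'$) with the inversion induced by $\tau$ on a generator $A_{i,j}$ with $\{i,j\}\neq\{1,2\}$ fixed by $\theta_0$. The paper asserts the permutation fact from the conjugation relations just as you flag it; no gap.
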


\begin{proof}
Suppose that $\theta \in \Aut(B_n)$ is an extension of $\theta_0$.
Then $z_n^\theta=z_n^{\theta_0}=z_n^{-1}$. Recall that
$\Aut(B_n) \cong  \Inn(B_n) \rtimes \langle \tau \rangle$.
Since for each $\psi \in \Inn(B_n)$, we have  $z_n^\psi=z_n$, it follows that
$\theta=\tau \psi$ for some $\psi \in \Inn(B_n)$. It follows from the relations in
$B_n$ that $\psi \in \Inn(B_n)$ induces a permutation of $P_n/P_n'$.
On the other hand, $\tau$ induces inversion in $P_n/P_n'$, more precisely,
$A_{i,j}^{\tau}\equiv A_{i,j}^{-1} \mod P_n'$ for all $i <j$. We can assume that
$$
P_n/Z(P_n)=
 \langle A_{1,3},A_{2,3}, A_{1,4}, A_{2,4}, A_{3,4}, \dots, A_{1, n}, A_{2,n}, \dots, A_{n-1,n} \rangle.
$$
Then we have
$$A_{1,3}=A_{1,3}^\theta=A_{1,3}^{\tau \psi} \equiv (A_{1,3}^{-1})^\psi \equiv A_{i,j}^{-1}\mod P_n',$$
for some $1 \leq i < j \leq n$, which is a contradiction. This proves the proposition.
\end{proof}

Finally, we prove the main result of this section.

\begin{theorem}\label{main-result-sn}
No non-trivial element of $\Aut_c(P_n)$ can be extended to an automorphism of $B_n$.
\end{theorem}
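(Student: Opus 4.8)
The plan is to combine the structural decomposition $\Aut_c(P_n) \cong tv^\circ(P_n) \rtimes \langle \theta_0 \rangle$ with the two partial non-extendability results already established. Recall that every element of $\Aut_c(P_n)$ can be written uniquely as $\phi^\circ \theta_0^{\delta}$ with $\phi^\circ \in tv^\circ(P_n)$ and $\delta \in \{0,1\}$, since the group is a semidirect product. The strategy is to assume for contradiction that some non-trivial $g = \phi^\circ \theta_0^\delta$ extends to an automorphism of $B_n$, and then to run a case analysis on the value of $\delta$.

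First I would dispose of the case $\delta = 0$: here $g = \phi^\circ$ lies in $tv^\circ(P_n)$, so if it is non-trivial and extendable, this directly contradicts \propref{autcent-circ}. Next, for the case $\delta = 1$, I would exploit the behaviour on the center. If $g = \phi^\circ \theta_0$ extends to some $\tilde g \in \Aut(B_n)$, then $z_n^{\tilde g} = z_n^{g} = z_n^{-1}$, because $\phi^\circ$ acts trivially on $Z(P_n)$ while $\theta_0$ inverts $z_n$. The key point is that this forces, just as in the proof of \propref{theta0}, that $\tilde g = \tau \psi$ for some $\psi \in \Inn(B_n)$: since $\Aut(B_n) \cong \Inn(B_n) \rtimes \langle \tau \rangle$ and inner automorphisms fix $z_n$, the inverting behaviour on the center pins down the $\tau$-component. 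Reading the action modulo $P_n'$ then yields that $g$ sends each $A_{i,j}$ to a conjugate-and-inverted-and-permuted generator, whereas a central automorphism of the form $\phi^\circ \theta_0$ must send $A_{i,j} \mapsto A_{i,j} z_n^{t_{ij}}$ up to the central inversion, i.e.\ it fixes each $A_{i,j}$ modulo $P_n'$ (up to the global sign coming from $\theta_0$). Comparing these two incompatible descriptions modulo $P_n'$ gives the contradiction exactly as in \propref{theta0}.

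The cleanest way to organize this is probably to observe that \propref{theta0} already handles $\theta_0$ itself, and then to reduce the general $\delta = 1$ case to that proposition. Concretely, if $\phi^\circ \theta_0$ were extendable, I would want to argue that $\theta_0 = (\phi^\circ)^{-1}(\phi^\circ \theta_0)$ is a product of an extendable element and $\phi^\circ$; but this is delicate because extendability is not obviously closed under products. A safer route is to redo the modulo-$P_n'$ computation directly for $\phi^\circ \theta_0$, noting that $\phi^\circ$ acts trivially modulo $P_n'$ (each $t_{ij}$ contributes only a power of the central $z_n$, which dies in a well-chosen basis of $P_n/Z(P_n)$), so $\phi^\circ \theta_0$ and $\theta_0$ have the \emph{same} induced map on $P_n/P_n'$. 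Hence the contradiction obtained in \propref{theta0} applies verbatim.

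The main obstacle I anticipate is bookkeeping the interaction between the central twist $z_n$ and the abelianization $P_n/P_n'$: one must verify that the $tv^\circ$-part genuinely acts trivially on the relevant quotient so that the $\theta_0$-argument transfers unchanged. This requires care because $z_n$ itself is a nontrivial product of the $A_{i,j}$ in $P_n/P_n'$, so ``fixing $A_{i,j}$ modulo $P_n'$'' and ``multiplying by $z_n^{t_{ij}}$'' must be reconciled in the quotient $P_n/Z(P_n)$ rather than $P_n/P_n'$. Once the correct quotient is fixed and the induced actions of $\phi^\circ \theta_0$ and $\theta_0$ are seen to coincide there, the theorem follows by combining \propref{autcent-circ} (for $\delta=0$) with the argument of \propref{theta0} (for $\delta=1$).
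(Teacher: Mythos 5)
Your case split is the same as the paper's ($\delta=0$ via \propref{autcent-circ}; $\delta=1$ by a separate argument), but your treatment of the case $\delta=1$, $\phi^\circ\neq 1$ is genuinely different from the paper's --- and, importantly, yours is the argument that actually works. The paper handles this case by claiming $\phi^2=(\phi^\circ\theta_0)^2=(\phi^\circ)^2\neq 1$ and then applying \propref{autcent-circ} to $\phi^2$. That squaring step is wrong: by the paper's own relations $\psi^2=1$ and $\psi\phi_{i,j}\psi=\phi_{i,j}^{-1}$ (with $\theta_0=\psi$), conjugation by $\theta_0$ \emph{inverts} $tv^\circ(P_n)$, so $\Aut_c(P_n)$ is generalized dihedral and $(\phi^\circ\theta_0)^2=\phi^\circ\,(\theta_0\phi^\circ\theta_0)=\phi^\circ(\phi^\circ)^{-1}=1$ for every $\phi^\circ$. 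Every element of the coset $tv^\circ(P_n)\theta_0$ is an involution, so the paper's reduction collapses, whereas your plan --- rerun the computation from \propref{theta0} directly on $\phi^\circ\theta_0$, using that it inverts $z_n$ to force an extension of the form $\hat{g}\tau$ --- is the correct repair, and in effect merges \propref{theta0} and the theorem into one abelianization argument.

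Two points in your write-up need tightening. First, your intermediate claim that $\phi^\circ$ ``acts trivially modulo $P_n'$'' is false as stated: $z_n$ is the product of all $\binom{n}{2}$ generators $A_{i,j}$, hence a nontrivial element of $P_n/P_n'$, so $\phi^\circ\theta_0$ and $\theta_0$ do \emph{not} induce the same map on $P_n/P_n'$. You notice this yourself; the clean fix is to work in $P_n/\big(Z(P_n)P_n'\big)$, the abelianization of $\overline{P}_n$, on which \emph{every} central automorphism acts as the identity, while inner automorphisms of $B_n$ still permute the images of the $A_{i,j}$ and $\tau$ still inverts them. Second, ``the contradiction applies verbatim'' needs one extra line, because this quotient is coarser than the $P_n/P_n'$ used in \propref{theta0}: the hypothetical extension $\hat{g}\tau$ now gives, for each pair, $e_{i,j}+e_{\sigma(i,j)}\in\mathbb{Z}\mathbf{1}$ in $\mathbb{Z}^{\binom{n}{2}}$, where the $e_{i,j}$ are the generator classes, $\sigma$ is a permutation of pairs, and $\mathbf{1}$ (the image of $z_n$) is the all-ones vector; comparing coordinate sums yields $2=c\binom{n}{2}$, which is impossible for $n\geq 3$. (Equivalently, stay in $P_n/P_n'$ and carry the central terms: $A_{i,j}z_n^{s_{ij}}\equiv A_{\sigma(i,j)}^{-1} \bmod P_n'$ forces the same relation.) Note also that $n\geq 3$ is genuinely needed here and should be stated: for $n=2$ the theorem is false, since inversion on $P_2$ extends to $\sigma_1\mapsto\sigma_1^{-1}$ on $B_2$.
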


\begin{proof}
Let $\phi=\alpha \tau^\epsilon \in \Aut_c(P_n)$, where $\alpha \in tv^\circ(P_n)$ and $\epsilon=0, 1$.
In view of Propositions \ref{autcent-circ} and \ref{theta0},
we can assume that $\alpha \neq 1$ and $\epsilon=1$.
Note that $\phi^2=(\alpha \tau)^2=\alpha^2 \neq 1$ since  $tv^\circ(P_n)$ is free abelian.
By Proposition \ref{autcent-circ}, $\alpha^2$ cannot be extended to an automorphism of $B_n$.
Therefore $\phi^2$, and hence $\phi$ cannot be extended to an automorphism of $B_n$.
\end{proof}

The following question remains.
\begin{question}
Which non-central automorphisms of $P_n$ can be extended to automorphisms of $B_n$?
\end{question}

\begin{ack}
The authors gratefully acknowledge support from the Russian Science Foundation Project No.16-41-02006 and the DST-RSF Project INT/RUS/RSF/P-2.
\end{ack}

\end{document}